\newcommand{\beq}{\begin{equation}}
\newcommand{\eeq}{\end{equation}}
\newcommand{\beqs}{\begin{equation*}}
\newcommand{\eeqs}{\end{equation*}}
\newcommand{\one}{{\bf 1}}
\newcommand{\mU}{\mathcal{U}}
\newcommand{\mV}{\mathcal{V}}
\newcommand{\BC}{\begin{center}}
\newcommand{\EC}{\end{center}}
\newcommand{\bdm}{\begin{displaymath}}
\newcommand{\edm}{\end{displaymath}}
\newcommand{\dia}[1]{\text{diag}(#1)}
\newcommand{\eq}[1]{Eq.\ (\ref{eq:#1})}
\newcommand{\noeq}[1]{(\ref{eq:#1})}
\newcommand{\eqs}[1]{Eqs.\ (\ref{eq:#1})}
\newcommand{\fig}[1]{Fig.\ \ref{fig:#1}}
\newcommand{\alg}[1]{Algorithm \ref{alg:#1}}
\newcommand{\sect}[1]{Sec.\ \ref{sec:#1}}
\newcommand{\subsec}[1]{Sec.\ \ref{subsec:#1}}
\newcommand{\thm}[1]{Theorem \ref{thm:#1}}
\newcommand{\defi}[1]{Definition \ref{def:#1}}
\newcommand{\tabl}[1]{Table \ref{tab:#1}}
\newcommand{\barr}{\begin{array}}
\newcommand{\earr}{\end{array}}
\newcommand{\beqas}{\begin{eqnarray*}}
\newcommand{\eeqas}{\end{eqnarray*}}
\title{A Self-learning Algebraic Multigrid Method for Extremal Singular Triplets and Eigenpairs}
\author{
Hans De Sterck\footnotemark[1] \footnotemark[4] 
}
\begin{document}
\maketitle
\renewcommand{\thefootnote}{\fnsymbol{footnote}}
\footnotetext[1]{Department of Applied Mathematics, University of Waterloo,
Waterloo, Ontario, Canada}
\footnotetext[4]{hdesterck@uwaterloo.ca}
\begin{abstract}
A self-learning algebraic multigrid method for dominant and minimal singular triplets and eigenpairs is described.
The method consists of two multilevel phases.
In the first, multiplicative phase (setup phase), tentative singular triplets are calculated along with a multigrid hierarchy of interpolation operators that approximately fit the tentative singular vectors in a collective and self-learning manner, using multiplicative update formulas. In the second, additive phase (solve phase), the tentative singular triplets are improved up to the desired accuracy by using an additive correction scheme with fixed interpolation operators, combined with a Ritz update. A suitable generalization of the singular value decomposition is formulated that applies to the coarse levels of the multilevel cycles. The proposed algorithm combines and extends two existing multigrid approaches for symmetric positive definite eigenvalue problems to the case of dominant and minimal singular triplets.
Numerical tests on model problems from different areas show that the algorithm converges to high accuracy in a modest number of iterations, and is flexible enough to deal with a variety of problems due to its self-learning properties.
\end{abstract}
\begin{keywords} multilevel method, algebraic multigrid, singular values, singular vectors, eigenvalues, eigenvectors
\end{keywords}
\begin{AMS} 65N55 Multigrid methods, 65F15 Eigenvalues, eigenvectors
\end{AMS}
\pagestyle{myheadings}
\thispagestyle{plain}
\markboth{}{}

\section{Introduction}
In this paper we present an algebraic multigrid (AMG) method for accurately computing a few of the largest or smallest singular values and associated singular vectors of a sparse rectangular matrix $A \in I\!\!R^{m \times n}$. Let the singular value decomposition (SVD) of $A$ be given by
\begin{align}
A=U \, \Sigma \, V^t.
\end{align}
Here, $U \in I\!\!R^{m \times m}$ and $V \in I\!\!R^{n \times n}$, with
$U^t\, U=I_m$ and $V^t\, V=I_n$, where $I_m$ and $I_n$ are the unit matrices of sizes $m \times m$ and 
$n \times n$, respectively.
Matrix $\Sigma \in I\!\!R^{m \times n}$ has the $l=\min(m,n)$ singular values
$\sigma_1 \ge \sigma_2 \ge \ldots \ge \sigma_l \ge 0$ of $A$ on its diagonal.
In what follows we will normally assume that $m\ge n$, except where noted otherwise.
The columns $u_j$ of $U$ are called the left singular vectors of $A$, and the columns $v_j$ of $V$ are its right singular vectors. The $n$ singular triplets $(\sigma_j,u_j,v_j)$, $j=1, \ldots, n$, satisfy
\begin{alignat}{1}
A \, v_j &= \sigma_j \, u_j, \nonumber\\
 A^t \, u_j &= \sigma_j \, v_j.
\label{eq:f}
\end{alignat}
For the special case that $A$ is square and symmetric positive definite (SPD), the SVD of $A$ coincides with the eigendecomposition of $A$, and a suitably simplified version of the AMG method we propose in this paper will be applicable to the problem of computing a few of the largest or smallest eigenvalues and associated eigenvectors of an SPD matrix $A$.

For definiteness, we will frame the presentation in most of the paper in terms of calculating a few of the singular triplets with largest singular values (which we call dominant triplets), and we will comment on the case of the singular triplets with the smallest singular values (which we call minimal triplets) at the end of the algorithm presentation.
So we assume we seek the $n_b$ dominant singular triplets $(\sigma_j,u_j,v_j)$, $j=1, \ldots, n_b,$ of $A$, with singular values $\sigma_1 \ge \sigma_2 \ge \ldots \ge \sigma_{n_b}$.

There are many applications in scientific computing where dominant or minimal singular triplets of large sparse matrices need to be computed, see, for example, the discussion and references in \cite{kokio,baglama}. We mention a few examples. Latent semantic indexing determines concepts in documents by calculating dominant singular triplets of term-document matrices \cite{lsi}. Similarly, principal component analysis is used in exploratory data analysis to identify orthogonal components with maximal variance, which correspond to dominant singular triplets of the data matrix \cite{pca}. In \cite{nonsym}, a smoothed aggregation method is described for nonsymmetric linear systems that arise from partial differential equation (PDE) discretization, and which requires approximate calculation of the minimal singular triplet of the problem matrix in a setup phase of the solver. Similarly, calculating dominant or minimal eigenpairs of SPD matrices also has many applications, see, e.g., 
\cite{lobpcg,borzi,hetmaniuk,kushnir,mgeigen}.

The computation of a few extremal singular triplets of large sparse matrices has been the focus of many research efforts, see, for example, \cite{kokio,baglama} and the numerous references therein. In recent times, Lanczos bidiagonalization methods and subspace iteration methods have received significant attention. 
Singular triplets can also be computed by applying symmetric eigenvalue solvers to $A^t\,A$ or the augmented operator
\begin{align}
\label{eq:augment}
X=\left[
\begin{array}{cc}
0 & A\\
A^t & 0
\end{array}
\right],
\end{align}
but the first approach can lead to poor accuracy of the computed singular values when $A$ is ill-conditioned. For the second approach the amount of storage and work required can be prohibitive, the number of iterations required to compute a given number of singular values increases, and the indefiniteness of operator $X$ has to be dealt with \cite{larsen}. For these reasons, methods are being pursued that avoid working on operators $A^t\,A$ and $X$ \cite{larsen,kokio,baglama}, and we do the same in this paper.
It appears that multilevel methods have not been explored yet for the calculation of singular triplets working directly on \eqs{f}. This is, perhaps, not surprising, since AMG methods for the SPD eigenproblem are also still quite a young area \cite{bamg,eis,borzi,hetmaniuk,kushnir}. It can be expected that AMG methods for extremal singular triplets will be competitive for problems in which the extremal singular values are highly clustered and the extremal singular vectors are similar to each other such that they can be represented well collectively by an interpolation operator that interpolates coarse-grid representations of the singular vectors to the fine grid. Nonsymmetric discretized elliptic PDE operators are expected to have this kind of spectral decomposition. We will investigate such a problem in the numerical results section of our paper, but we think that it is also interesting to investigate the applicability and performance of our algorithm for other, more general SVD problems, and we do so in the numerical results section as well. Numerical results will also be presented for SPD eigenproblems, since our algorithm offers a new extension of previous approaches for this type of problems as well.

Algebraic multigrid was originally developed for solving sparse systems of linear equations (see \cite{amg1} and references in \cite{stueben} and \cite{mgtut}). Over the years, its applicability has been extended in several ways, including to SPD eigenvalue problems \cite{bamg,eis,borzi,hetmaniuk,kushnir}. 
The AMG method we propose belongs to the class of {\em self-learning} AMG methods (we borrow this term from \cite{ibamg}). In these methods, a multigrid hierarchy is built with interpolation operators that are determined adaptively and iteratively over several multilevel cycles, to match approximately the vectors that are of interest in the problem at hand. For linear system solvers, these are the vectors that lie close to the null-space of the matrix, and for eigenvalue problems, they are the desired eigenvectors. In our new method for singular triplets, they will be the desired singular vectors.
Self-learning AMG solvers are an active area of research and have been developed for solving linear equation systems, SPD eigenproblems, and Markov chain problems, see, for example,
\cite{amg1,bamg,asa,aamg,eis,sam,mcamg,bamgMarkov,fly,over,ira,ibamg,nonsym,bamgQcd}.
Our AMG method is also {\em collective}, in that it strives to represent several singular vectors by a single interpolation matrix for efficiency.

The AMG method we propose for computing dominant singular triplets consists of two multilevel phases.
It combines and extends two existing AMG approaches for the SPD eigenproblem, that were proposed by Borzi and Borzi in \cite{borzi} and by Kushnir, Galun and Brandt in \cite{kushnir}.
In the first, multiplicative phase (setup phase), we calculate tentative singular triplets and a multigrid hierarchy with interpolation operators that approximately fit the tentative singular vectors in a collective and self-learning manner. This phase uses power method relaxation and multiplicative coarse-grid update formulas for the tentative singular vectors. We use the bootstrap framework \cite{bamg} in this phase with least-squares fitting and random initial singular vectors, in  a way similar to the approach described by Kushnir, Galun and Brandt in \cite{kushnir} for calculating minimal eigenpairs of an SPD matrix. In other related work, the setup phase of the algorithm described in \cite{nonsym} calculates an approximation of the singular vectors that correspond to the smallest singular value of a square nonsymmetric matrix, in a way that is less general than but similar to our multiplicative phase.
In \cite{kushnir}, great care is taken to try to make the interpolation operators highly accurate for all eigenvectors, in the spirit of the exact interpolation scheme (EIS) \cite{eis}, leading to an eigenvalue solver that only employs this first, multiplicative phase, with accuracy limited to the accuracy by which the single interpolation operator represents each eigenvector. In our approach, however, we use generic interpolation that fits the tentative singular vectors only approximately, and we employ a second, additive phase (solve phase), in which the tentative singular triplets are improved up to the desired accuracy by using an additive correction scheme with fixed interpolation operators, combined with a Ritz update. Our additive phase is similar to the approach described by Borzi and Borzi in \cite{borzi} for calculating minimal eigenpairs of an SPD matrix (which itself is an extension of \cite{mgeigen}), but in \cite{borzi} standard AMG interpolation is used, and there is no initial multiplicative self-learning phase. Our hybrid multiplicative-additive approach results in a new AMG method for extremal singular triplets that combines two desirable properties: it allows for high-accuracy convergence when desired, and it is flexible enough to deal efficiently with a variety of problems due to its self-learning properties. The specialization of our algorithm to the SPD eigenpair case also leads to a new extension of the AMG eigenvalue algorithms of \cite{borzi} and \cite{kushnir} that has the same desirable properties.

The remainder of this paper is structured as follows. In the next section we give a brief description of multiplicative and additive two-level schemes for solving $(A - \lambda I) \, x = 0$, with $A$ a square SPD matrix and $\lambda$ an assumed given, fixed eigenvalue. This will serve to elucidate under which circumstances multiplicative and additive update formulations can be equivalent for calculating eigenvectors, and to illustrate when it may be beneficial for accuracy and computational cost reasons to append a phase with additive cycles to an initial multiplicative, self-learning phase. This will set the stage for the description of the multiplicative (setup) phase of our singular triplet algorithm in Section \ref{sec:mult}. This section also introduces a suitable generalization of the SVD for formulating the coarse-level problems. Section \ref{sec:add} then describes the additive (solve) phase of the algorithm. Section \ref{sec:spec} describes how it can be extended and specialized to the case of square matrices, minimal singular triplets and extremal eigenpairs of SPD matrices. Section \ref{sec:numres} contains extensive numerical evaluation of our algorithm, 
and Section \ref{sec:conc} concludes.

\section{Two-level Methods for $(A-\lambda\,I)\,x=0$}
In this section, we consider multiplicative and additive two-level methods for calculating an eigenvector of a square SPD matrix $A \in I\!\!R^{m \times m}$, assuming, for now, that the eigenvalue $\lambda$ is known. This academic discussion serves to highlight the principles behind the multiplicative and additive approaches, and how they are related and can be combined for calculating eigenvectors. The insights gained will motivate the approach of our multilevel algorithm for calculating dominant singular triplets. Assuming eigenvalue $\lambda$ is known, we seek a nontrivial solution $x$ to equation
\begin{align}
(A -\lambda \, I)\, x = 0,
\end{align}
with $I$ generically denoting the unit matrix.
For definiteness, we can simply assume that $\dim (\ker(A-\lambda\,I))=1$, and that we seek a solution with $\|x\|_2=1$. We will consider two-level iterative schemes with relaxations on the fine level (or fine grid) combined with coarse-grid corrections that are obtained via solving a smaller problem on a coarse grid.
\subsection{Multiplicative Correction Scheme}
Let $x^{(i)}$ be the current fine-grid approximation and $e_{mult}^{(i)}$ be its multiplicative error, such that
\begin{align}
x = \dia{x^{(i)}} \, e_{mult}^{(i)},
\label{eq:multError}
\end{align}
where $x$ is the exact solution of the problem, and $\dia{x^{(i)}}$ is a diagonal matrix with $x^{(i)}$ on its diagonal.
Note that, at convergence, when $x^{(i)}=x$, the multiplicative error satisfies $e_{mult}^{(i)}=\one$, with $\one$ the vector of all ones.
The problem at hand can be rewritten as
\begin{align}
(A -\lambda \, I) \, \dia{x^{(i)}} \, e_{mult}^{(i)} =0,
\label{eq:a}
\end{align}
in which we seek the unknown multiplicative error $e_{mult}^{(i)}$.
We consider a coarse grid with $m_c$ unknowns (which may be a subset of the fine-grid unknowns), and, instead of the fine-level multiplicative error, $e_{mult}^{(i)}$, we seek to compute a coarse-grid multiplicative error $e_{mult,c}$, which, when interpolated up to the fine grid, would approximately equal the unknown fine-level multiplicative error. This may be an inexpensive way to improve the fine-level error, since $e_{mult,c}$ can be computed inexpensively on the coarse grid.
So we seek $e_{mult,c}$ such that
\begin{align}
Q \, e_{mult,c} \approx e_{mult}^{(i)},
\label{eq:b}
\end{align}
with $Q \in I\!\!R^{m \times m_c}$ a coarse-to-fine interpolation matrix for the error, which we require to satisfy
$Q \, \one_c=\one$ (with $\one_c$ the coarse-level vector of all ones).
Combining Eqs.\ (\ref{eq:a}) and (\ref{eq:b}) and with the help of a restriction operator, $R \in I\!\!R^{m_c \times m}$, we arrive at the following $m_c \times m_c$ system of equations for $e_{mult,c}$:
\begin{align}
R\, (A -\lambda \, I) \, \dia{x^{(i)}} \, Q \, e_{mult,c} =0.
\end{align}
\eqs{multError}  and (\ref{eq:b}) then lead to the multiplicative coarse-grid correction formula for the fine-grid approximation:
\begin{align}
x^{(i+1)} = \dia{x^{(i)}} \, Q \, e_{mult,c}.
\label{eq:cgcMult}
\end{align}
It is also useful to define the interpolation matrix $P \in I\!\!R^{m \times m_c}$, given by
\begin{align}
P= \dia{x^{(i)}} \, Q ,
\label{eq:Pmult}
\end{align}
which has the property that the current fine-grid approximation, $x^{(i)}$, lies exactly in its range, namely,
\begin{align}
x^{(i)}=P\,\one_c. 
\label{eq:range1}
\end{align}
More generally, we have that there exists a coarse-level vector $e_{mult,c}^{(i)}$ (and we know which one) such that
\begin{align}
x^{(i)} = P \, e_{mult,c}^{(i)}.
\label{eq:range}
\end{align}
Using interpolation operator $P$, the coarse-grid equation becomes
\begin{align}
R \, (A-\lambda \, I) \, P \, e_{mult,c} =0,
\end{align}
and the multiplicative coarse-grid correction formula
\begin{align}
x^{(i+1)} = P \, e_{mult,c}.
\label{eq:multCorr}
\end{align}
Considering a two-level method with coarse-grid correction according to \eq{multCorr}, we note that, for the exact solution $x$ to be a fixed point of such a two-level method, $x$ indeed needs to lie exactly in the range of $P$ at convergence, and the multiplicative correction scheme described above assures this by having $x^{(i)}$ lie exactly in the range of $P$ in each iteration. It is important to realize that this is required for the multiplicative scheme to converge to the exact solution. Since $P$ (and possibly $R$) change in every iteration to adapt to the solution sought, we call this multiplicative two-level scheme self-learning.

Note also, that one can always consider a rescaled coarse-level unknown quantity, say $x_c$,
using a diagonal scaling matrix $W$,
\begin{align}
e_{mult,c} = W\, x_c,
\end{align}
and formulate the multiplicative scheme in terms of $x_c$. For example, defining $\hat{P}=P\,W$, one solves coarse-level problem $R\,A\,\hat{P}\,x_c=0$ and corrects with multiplicative coarse-grid correction formula $x^{(i+1)}=\hat{P}\,x_c$. Such an $x_c$ is generally not a multiplicative error anymore (since it does not hold that $x_c=\one_c$ at convergence), but can be some kind of coarse-level representation of the fine-level exact solution $x$. The current iterate, $x^{(i)}$, still lies exactly in the range of $\hat{P}$ in each iteration. This viewpoint is adopted in the derivation of the multiplicative correction scheme as an Exact Interpolation Scheme \cite{eis}, while our derivation is the more common viewpoint in the context of Markov chains \cite{horton,sam,mcamg}. We take this viewpoint here to highlight the similarity between the multiplicative and additive error correction formalisms, as will be discussed now.
\subsection{Additive Correction Scheme}
Multigrid for linear systems of equations is normally formulated in an additive-correction framework \cite{mgtut}.
Define residual $r^{(i)}$ of current approximation $x^{(i)}$ as
\begin{align}
r^{(i)} = -(A - \lambda \,I) \,x^{(i)}.
\label{eq:resid}
\end{align}
The additive error, $e_{add}^{(i)}$, satisfies
\begin{align}
x = e_{add}^{(i)} +x^{(i)},
\label{eq:addError}
\end{align}
and the problem at hand can be rewritten as error equation
\begin{align}
(A - \lambda \,I) \, e_{add}^{(i)}= r^{(i)}.
\label{eq:c}
\end{align}
We seek to compute a coarse-grid additive error, $e_{add,c}$, which, when interpolated up to the fine grid, would approximately equal the unknown fine-level additive error. So we seek $e_{add,c}$ such that
\begin{align}
P \, e_{add,c} \approx e_{add}^{(i)},
\label{eq:d}
\end{align}
for some coarse-to-fine interpolation operator $P \in I\!\!R^{m \times m_c}$.
Combining \eqs{c} and (\ref{eq:d}) and
with the help of a restriction operator, $R \in I\!\!R^{m_c \times m}$, we arrive at the following $m_c \times m_c$ system of equations for $e_{add,c}$:
\begin{align}
R \, (A-\lambda\, I) \, P \, e_{add,c} = R\,r^{(i)}.
\label{eq:cgeAdd}
\end{align}
\eqs{addError} and (\ref{eq:d}) then lead to the additive coarse-grid correction formula for the fine-grid approximation:
\begin{align}
x^{(i+1)}=x^{(i)}+P\, e_{add,c}.
\end{align}
Fast convergence of the two-level process requires that additive error components that are not significantly reduced by fine-level relaxation lie approximately in the range of $P$ (and can thus be removed by coarse-grid correction).
First consider $P=\dia{x^{(i)}}\, Q$ as above, see \eq{Pmult}. This means that $x^{(i)}$ lies exactly in the range of $P$,
and, close to convergence, $x$ will lie approximately in the range of $P$ as well. This means that $e_{add}^{(i)}=x-x^{(i)}$ also lies approximately in the range of $P$, so the `self-learning' $P$ from \eq{Pmult} is expected to give a suitable interpolation operator also for the additive correction scheme.

In fact, if the same $P$ is used as in the multiplicative method (with
$x^{(i)}$ lying exactly in the range of $P$, \eq{range}) in every iteration of the additive scheme, and if $R$ is also taken the same as in the multiplicative scheme, then the additive and multiplicative schemes are exactly equivalent. This can easily be seen as follows. First, using \eq{resid} and \eq{range1}, additive coarse-level equation \eq{cgeAdd} can be rewritten as
\begin{alignat}{1}
&R\,(A-\lambda\, I)\,P\, e_{add,c}=R\,r^{(i)}=-R\,(A-\lambda\, I)\,x^{(i)}=-R\,(A-\lambda\, I)\,P\,\one_c,\\
&R\,(A-\lambda\, I)\,P\,(e_{add,c}+\one_c)=0,
\end{alignat}
and by identifying
\begin{align}
e_{add,c}+\one_c=e_{mult,c},
\label{eq:e}
\end{align}
one obtains the multiplicative coarse-grid equation,
\begin{align}
R\,A\,P\,e_{mult,c}=0.
\end{align}
\eq{e} has the nice interpretation that at convergence, on the coarse grid, the additive error, $e_{add,c}$, vanishes, and the multiplicative error, $e_{mult,c}$, equals $\one_c$.
Similarly, using \eq{range}, the multiplicative coarse-grid correction formula can be obtained from the additive coarse-grid correction formula:
\begin{align}
&x^{(i+1)}=x^{(i)}+P\, e_{add,c}=P\,(\one_c+e_{add,c}), \nonumber\\
&x^{(i+1)}=P\,e_{mult,c}.
\end{align}
This shows that the additive scheme with $R$ and $P$ chosen (in every iteration) as in the multiplicative scheme (with current iterate $x^{(i)}$ exactly in the range of $P$), is fully equivalent with the multiplicative scheme (in exact arithmetic). 

However, unlike the multiplicative scheme, the additive scheme can still converge if $x^{(i)}$ (and thus $e_{add}^{(i)}$) lies only approximately in the range of $P$. Therefore, one approach to obtaining a convergent additive method is to first (adaptively) determine $P$ (and $R$) in a few multiplicative cycles, and then freeze $R$ and $P$ for subsequent additive cycles. Additive cycles with frozen $R$ and $P$ are much cheaper computationally than cycles in which $P$ (and possibly $R$) are modified in each iteration, often without sacrificing convergence speed too much, and the resulting hybrid method may be significantly cheaper than a fully adaptive method, since all coarse-level operators are kept constant in the additive phase. This is one reason to consider hybrid multiplicative-additive methods for eigenvalues (see also \cite{bamgMarkov,fly,over} for application of this approach in the Markov chain context). The original multigrid method for solving linear equation systems is formulated in the additive framework, with fixed $R$ and $P$ that are determined using a-priori knowledge of the problem. Self-learning methods for linear systems of equations first perform some multiplicative, self-learning setup cycles to determine suitable interpolation operators, before proceeding with additive cycles with fixed interpolation \cite{amg1,bamg,asa,aamg,eis,ibamg,nonsym,bamgQcd}. In the context of self-learning solvers for linear equation systems, the first, multiplicative self-learning phase is often called the setup phase, because it is merely used for setting up the solver, and no approximation to the solution of the linear system $A\,x=b$ is sought in the multiplicative phase. When computing eigenpairs, however, the multiplicative phase is not merely a setup phase, since it also iterates on approximations for the eigenpairs, and it can be used as an eigensolver by itself, as in \cite{horton,eis,sam,mcamg,kushnir}. For this reason, we more generally refer to it as a multiplicative phase, in the present context of eigenvalue and singular value problems.

In this paper we consider methods to compute a few dominant singular triplets or eigenpairs that are hybrid multiplicative-additive, not only for performance reasons, but mainly for the following reason: we will seek to formulate multilevel methods in which, for efficiency, the same interpolation matrix $P$ can be used to approximate several singular vectors or eigenvectors at the same time. This interpolation matrix will not contain all of these singular vectors or eigenvectors in its range exactly, so a multiplicative scheme will only converge up to the accuracy by which the vectors are collectively represented by the interpolation matrix. A multiplicative scheme will be used to initiate the calculations and approximately identify the dominant singular vectors or eigenvectors, determining suitable interpolation operators in the process. Rather than attempting to construct interpolation operators that are very accurate for all vectors sought (as is done in \cite{kushnir} for eigenvalue calculation), we will switch to an additive scheme in our singular triplet method, mainly because it can converge with high accuracy for all vectors sought, and with the added benefit that it will be inexpensive per cycle.

\section{AMG SVD Algorithm: Multiplicative Phase}
\label{sec:mult}
We now go back to the general setting of our paper in which we want to compute dominant singular triplets $(\sigma,u,v)$ of rectangular matrix $A \in I\!\!R^{m \times n}$, satisfying
\begin{alignat}{1}
A \, v &= \sigma \, u, \nonumber\\
 A^t \, u &= \sigma \, v.
\label{eq:svdFine}
\end{alignat}
In this section, we formulate the multiplicative phase of the algorithm. (Note that we will redefine the interpolation and restriction matrices $P, \, Q, \, R,$ etc.)

\subsection{Coarse-level Equations}
Consider interpolation matrices $P$ for $u$ and $Q$ for $v$, with $P \in I\!\!R^{m \times m_c}$
and $Q \in I\!\!R^{n \times n_c}$, and $P$ and $Q$ of full rank.
First assume that $u$ lies exactly in the range of $P$, and $v$ in the range of $Q$, so
\begin{alignat}{1}
u &= P \, u_c, \nonumber \\
v &= Q \, v_c,
\label{eq:ranges}
\end{alignat}
for some coarse-level vectors $u_c$ and $v_c$.
We define coarse-level equations
\begin{alignat}{1}
P^t \, A\, Q \, v_c &=\sigma \, P^t \, B \, P \, u_c,\nonumber \\
Q^t \, A^t\, P \, u_c &=\sigma \, Q^t \, C \, Q \, v_c,
\end{alignat}
and coarse-level operators
\begin{alignat}{1}
\label{eq:coarseOp}
A_c &= P^t \, A\, Q,\nonumber \\
B_c &= P^t \, B \, P,\\
C_c &= Q^t \, C \, Q, \nonumber
\end{alignat}
with, for the finest-level operators, $B=I_m$ and $C=I_n$.
The coarse-level version of fine-level equations (\ref{eq:svdFine}) is then given by
\begin{alignat}{1}
A_c \, v_c &=\sigma \, B_c \, u_c, \nonumber\\
A_c^t\, u_c &=\sigma \, C_c \, v_c.
\label{eq:svdCoarse}
\end{alignat}
The intuition behind this approach is as follows: the coarse-level equations can be expected to be useful for finding triplet $(\sigma,u,v)$, since, if $(\sigma,u,v)$ is a singular triplet of $A$ and \eqs{ranges} are assumed, then $(\sigma,u_c,v_c)$ is a singular triplet of $A_c$. So one can see that, if $P$ and $Q$ can be constructed such that $u$ and $v$ lie exactly in their respective ranges (\eqs{ranges}), then a coarse-level solve can give us $(\sigma,u,v)$ exactly. The same reasoning applies when coarsening is repeated recursively. Note that the $B_c$ and $C_c$ on all recursive levels are symmetric positive definite (SPD) since the $P$ and $Q$ are chosen of full rank. We will now consider methods to build $P$ and $Q$ such that $u$ and $v$ lie in their respective ranges approximately.

\subsection{Generalization of Singular Value Problem}
Coarse-level equations (\ref{eq:svdCoarse}) are of the form
\begin{alignat}{1}
A \, v &=\sigma \, B \, u, \nonumber\\
A^t\, u &=\sigma \, C \, v,
\label{eq:svdGenSeparate}
\end{alignat}
with $B$ and $C$ SPD.
The coarse-level equations motivate the following generalization of the singular value decomposition.
\begin{definition}[Generalized singular value decomposition]
The generalized singular value decomposition of $A \in I\!\!R^{m \times n}$ with respect to $B\in I\!\!R^{m \times m}$ and $C\in I\!\!R^{n \times n}$, with $B$ and $C$ SPD, is given by
\begin{align}
A = B \, U \, \Sigma \, V^t \, C, 
\label{eq:genSVDDef}
\end{align}
with $U \in I\!\!R^{m \times m}$, $V \in I\!\!R^{n \times n}$ and $\Sigma \in I\!\!R^{m \times n}$. The columns of $U$ are called the left generalized singular vectors, and the columns of $V$ are called the right generalized singular vectors. They satisfy the orthogonality relations $U^t\, B \, U=I_m=U\, B \, U^t$ and $V^t\, C\, V=I_n=V\, C\, V^t$.
Matrix $\Sigma$ has the $l=\min(m,n)$ real nonnegative generalized singular values $\sigma_1 \ge \sigma_2 \ge \ldots \ge \sigma_l \ge 0$ on its diagonal.
\eqs{svdGenSeparate} are called the generalized singular value problem for matrix $A$ with respect to matrices $B$ and $C$.
\label{def:genSVD}
\end{definition}

It is easy to see that the generalized singular triplets $(\sigma,u,v)$ of generalized SVD \noeq{genSVDDef} satisfy
\eqs{svdGenSeparate}. When $B=I_m$ and $C=I_n$, generalized SVD \noeq{genSVDDef} reduces to the standard SVD.

It has to be remarked that the notion of generalized SVD as defined above is different from the more commonly used generalized SVD of $A  \in I\!\!R^{m \times n}$ with respect to $B \in I\!\!R^{p \times n}$ (with $m\ge n$), as, for example, defined in \cite{golub}, p.~471. \defi{genSVD} is the sense of generalized SVD that we need in this paper. While \eq{genSVDDef} is a natural generalization of the singular value decomposition and relates to it in the same way the generalized eigenvalue problem (as it is commonly defined) relates to the standard eigenvalue problem, we have not been able to find it in the literature yet. In what follows, we formulate the properties of the generalized SVD that are useful for the calculations to be done in our multilevel cycles. We discuss existence and uniqueness, which is important for the well-posedness of our multilevel cycles, and we explain how the generalized SVD can be calculated, which we will need to do on the coarsest level of our multilevel cycles.
\begin{theorem}
Generalized SVD \noeq{genSVDDef} has the same existence and uniqueness properties as the standard SVD.
\end{theorem}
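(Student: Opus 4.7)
The plan is to reduce the generalized SVD to the ordinary SVD by a congruence with the unique SPD square roots $B^{1/2}$ and $C^{1/2}$. Since $B$ and $C$ are SPD, these square roots exist uniquely and are invertible, with inverses $B^{-1/2}$ and $C^{-1/2}$. Define
\[
\tilde{A} \;=\; B^{-1/2}\, A\, C^{-1/2} \;\in\; I\!\!R^{m\times n}.
\]
My strategy is to establish a bijective correspondence between factorizations of $A$ of the form \noeq{genSVDDef} (satisfying the orthogonality relations of \defi{genSVD}) and standard singular value decompositions of $\tilde{A}$, and then invoke the known existence and uniqueness properties of the ordinary SVD.

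For the forward direction, suppose $A = B\, U\, \Sigma\, V^t\, C$ with $U$, $\Sigma$, $V$ satisfying \defi{genSVD}. Setting $\tilde{U} = B^{1/2} U$ and $\tilde{V} = C^{1/2} V$, a direct calculation gives $\tilde{U}^t \tilde{U} = U^t B U = I_m$ and $\tilde{V}^t \tilde{V} = V^t C V = I_n$, so $\tilde{U}$ and $\tilde{V}$ are orthogonal in the ordinary sense. Moreover,
\[
\tilde{A} \;=\; B^{-1/2}\bigl(B\,U\,\Sigma\,V^t\,C\bigr)C^{-1/2} \;=\; B^{1/2}\, U\, \Sigma\, V^t\, C^{1/2} \;=\; \tilde{U}\,\Sigma\,\tilde{V}^t,
\]
which is a standard SVD of $\tilde{A}$ with the same diagonal matrix $\Sigma$. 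Conversely, given any standard SVD $\tilde{A} = \tilde{U}\Sigma\tilde{V}^t$, the assignment $U = B^{-1/2}\tilde{U}$, $V = C^{-1/2}\tilde{V}$ reproduces a factorization meeting all conditions of \defi{genSVD}, as a symmetric verification shows. This gives the desired bijection, and in particular shows $\Sigma$ is preserved on the nose.

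Because the correspondence preserves $\Sigma$ and is bijective on the admissible $(U,V)$ pairs, both existence and uniqueness transfer directly from the standard SVD of $\tilde{A}$. Existence is immediate. For uniqueness, the generalized singular values $\sigma_1\ge\cdots\ge\sigma_l\ge 0$ are uniquely determined and coincide with the ordinary singular values of $\tilde{A}$; each left/right generalized singular vector pair is unique up to a common sign for simple $\sigma_i$, and unique up to a joint orthogonal rotation within each repeated-singular-value block, where ``orthogonal'' in the $\tilde{A}$ picture translates under the square-root congruence into $B$-orthogonality of $U$ and $C$-orthogonality of $V$ in the original picture. The only care point is formulating the uniqueness statement precisely in the presence of multiple singular values; no real technical obstacle arises, since every step is an equivalence under invertible SPD transformations.
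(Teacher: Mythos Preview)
Your proof is correct and follows essentially the same approach as the paper: both reduce the generalized SVD to a standard SVD of $B^{-1/2}AC^{-1/2}$ via the substitutions $\tilde U=B^{1/2}U$, $\tilde V=C^{1/2}V$. Your version is more detailed in spelling out both directions of the correspondence and the precise form of uniqueness, but the underlying idea is identical.
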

\begin{proof}
This follows from a simple change of variables: with
\begin{alignat}{1}
T  &= B^{1/2}\,U, \nonumber \\
W &= C^{1/2}\,V, \\
D &= B^{-1/2} \, A \, C^{-1/2}, \nonumber
\label{eq:svdTrafo}
\end{alignat}
generalized SVD \noeq{genSVDDef} can be rewritten as a standard SVD
\begin{align}
D = T \, \Sigma \, W^t. 
\end{align}
\end{proof}

This change of variables provides a first manner of computing generalized SVD \noeq{genSVDDef} using standard SVD algorithms. An alternative way of computing generalized SVD \noeq{genSVDDef} proceeds as follows.
Let
\begin{align}
\label{eq:X}
X&=
\left[
\begin{array}{cc}
0 & A\\
A^t & 0
\end{array}
\right],\\
Y&=
\left[
\begin{array}{cc}
B & 0\\
0 & C
\end{array}
\right].
\end{align}
It is clear that $X$ is symmetric and $Y$ is SPD, and 
\begin{align}
\left(
X-\sigma \,
Y
\right) \,
z
=0,
\label{eq:genEig}
\end{align}
is a symmetric generalized eigenvalue problem of size $(m+n)\times(m+n)$, with $m+n$ real eigenvalues $\sigma_j$ and associated eigenvectors $[u_j^t \ v_j^t]^t$, which can be chosen orthonormal with respect to $Y$.
The following theorem indicates how the solutions of this generalized eigenvalue problem can be used to compute the generalized singular triplets of generalized SVD \noeq{genSVDDef}.
\begin{theorem}
Let $A \in I\!\!R^{m \times n}$, $B \in I\!\!R^{m \times m}$ and $C \in I\!\!R^{n \times n}$, with $B$ and $C$ SPD.
Let $l=\min(m,n)$. Then generalized eigenvalue problem
\begin{align}
\left(
\left[
\begin{array}{cc}
0 & A\\
A^t & 0
\end{array}
\right]
-\sigma \,
\left[
\begin{array}{cc}
B & 0\\
0 & C
\end{array}
\right]
\right) \,
\left[
\begin{array}{c}
u\\
v
\end{array}
\right]
=0,
\label{eq:genSVD}
\end{align}
has $m+n$ solution triplets $(\sigma,u,v)$ with linearly independent eigenvectors $[u^t \, v^t]^t\ne0$.
There are $l$ independent solutions with $\sigma_j\ge0$ and vectors $u_j$ and $v_j$ satisfying orthogonality relations $u_j^t\,B\,u_i=\delta_{i,j}$ and $v_j^t\,C\,v_i=\delta_{i,j}$ ($j=1,\ldots,l$). The triplets $(\sigma_j,u_j,v_j)$ are the generalized singular triplets of $A$ with respect to $B$ and $C$.
Furthermore, there are $l$ independent solutions $(-\sigma_j,u_j,-v_j)$.
Finally, there are ${\rm abs}(m-n)=m+n-2\,l$ independent solutions with $\sigma=0$ and either $u=0$ or $v=0$.
\label{thm:genSVD}
\end{theorem}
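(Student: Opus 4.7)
The plan is to reduce the generalized eigenvalue problem \noeq{genSVD} to a standard symmetric eigenvalue problem by exploiting the SPD structure of $Y$, and then read off all $m+n$ eigenpairs directly from the standard SVD of $D=B^{-1/2}\,A\,C^{-1/2}$ that was introduced in the previous theorem.

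First I would write $Y^{1/2}=\mathrm{diag}(B^{1/2},C^{1/2})$ and substitute $z=Y^{-1/2}\tilde z$ into \eq{genSVD}. Since $Y$ is SPD, this is an invertible change of variable, and the problem becomes $\tilde X\,\tilde z=\sigma\,\tilde z$ with
\begin{equation*}
\tilde X \;=\; Y^{-1/2}\,X\,Y^{-1/2} \;=\; \left[\begin{array}{cc} 0 & D\\ D^t & 0\end{array}\right],
\end{equation*}
a symmetric matrix whose eigenvectors and eigenvalues stand in one-to-one correspondence (via $Y^{-1/2}$) with those of the original pencil, and whose eigenvectors may be chosen orthonormal in the standard inner product.

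Next I would use the standard SVD $D=T\,\Sigma\,W^t$, with $T\in I\!\!R^{m\times m}$ and $W\in I\!\!R^{n\times n}$ orthogonal and $l=\min(m,n)$ diagonal entries $\sigma_1\ge\cdots\ge\sigma_l\ge 0$. For $j=1,\ldots,l$ the identities $D\,w_j=\sigma_j\,t_j$ and $D^t\,t_j=\sigma_j\,w_j$ yield at once
\begin{equation*}
\tilde X \left[\begin{array}{c} t_j\\ \pm w_j\end{array}\right] \;=\; (\pm\sigma_j)\left[\begin{array}{c} t_j\\ \pm w_j\end{array}\right],
\end{equation*}
giving $2l$ eigenvectors with eigenvalues $\pm\sigma_j$. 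If $m>n$, the remaining $m-n$ columns of $T$ satisfy $D^t t_j=0$, and the block vectors $[t_j^t\ 0]^t$ are eigenvectors of $\tilde X$ with eigenvalue $0$; symmetrically, if $n>m$, the leftover columns of $W$ give eigenvectors $[0\ w_j^t]^t$ with eigenvalue $0$. Together this accounts for $2l+|m-n|=m+n$ vectors, which are linearly independent because the explicit formulas above produce an orthonormal family (up to a factor $\sqrt 2$) built from the orthonormal columns of $T$ and $W$; in particular, when some $\sigma_j$ with $j\le l$ happens to vanish, the two distinct vectors $[t_j^t,\pm w_j^t]^t$ still remain linearly independent, and they together with the leftover zero-$\sigma$ block vectors form a basis of the relevant zero-eigenspace.

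Finally I would transport the result back by $z=Y^{-1/2}\tilde z$, which preserves linear independence. Setting $u_j=B^{-1/2}\,t_j$ and $v_j=C^{-1/2}\,w_j$ for $j=1,\ldots,l$, orthogonality of $T$ and $W$ gives
\begin{equation*}
u_j^t\,B\,u_i \;=\; t_j^t\,t_i \;=\; \delta_{i,j},\qquad v_j^t\,C\,v_i \;=\; w_j^t\,w_i \;=\; \delta_{i,j},
\end{equation*}
matching exactly the orthogonality relations of \defi{genSVD}; hence the $l$ triplets $(\sigma_j,u_j,v_j)$ with $\sigma_j\ge 0$ are the generalized singular triplets of $A$ with respect to $B$ and $C$. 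The $l$ sign-flipped eigenvectors yield the triplets $(-\sigma_j,u_j,-v_j)$, and the $|m-n|$ leftover zero-$\sigma$ eigenvectors yield triplets with either $u=0$ or $v=0$, completing the enumeration. The main obstacle here is not technical difficulty but careful bookkeeping in the possibly degenerate case $\sigma_j=0$ for some $j\le l$ together with $m\ne n$, where one must confirm that the three families of vectors listed in the theorem remain linearly independent and exhaust the $m+n$-dimensional eigenspace decomposition of $\tilde X$.
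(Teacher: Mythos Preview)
Your proposal is correct and follows essentially the same route as the paper: both reduce \eq{genSVD} to the standard eigenvalue problem for the block matrix $\left[\begin{smallmatrix}0 & D\\ D^t & 0\end{smallmatrix}\right]$ via the change of variables $t=B^{1/2}u$, $w=C^{1/2}v$, $D=B^{-1/2}AC^{-1/2}$. The only difference is that the paper simply cites \cite{golub}, p.~427, for the eigenstructure of this block matrix, whereas you spell out the construction of the $m+n$ eigenvectors from the columns of $T$ and $W$ and verify the orthogonality relations explicitly.
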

\begin{proof}
This follows directly from the variable transformations \eqs{svdTrafo}, which transform generalized eigenvalue problem
\eq{genSVD} into eigenvalue problem
\begin{align}
\left(
\left[
\begin{array}{cc}
0 & D\\
D^t & 0
\end{array}
\right]
-\sigma \,
\left[
\begin{array}{cc}
I_m & 0\\
0 & I_n
\end{array}
\right]
\right) \,
\left[
\begin{array}{c}
t\\
w
\end{array}
\right]
=0,
\label{eq:genSVDTraf2}
\end{align}
which has the properties listed in the theorem, see, for example, \cite{golub}, p.~427.
\end{proof}

A third possible way to calculate generalized SVD \noeq{genSVDDef} is by solving for the left and right generalized singular vectors separately, using
\begin{alignat}{1}
(A^t \, B^{-1} \, A) \, v = \sigma^2 \, C \, v, \nonumber\\
(A \, C^{-1} \, A^t) \, u = \sigma^2 \, B \, u.
\label{eq:svdGenSeparate2}
\end{alignat}

\subsection{Bootstrap AMG V-cycles}
In this section, we describe how we use the bootstrap AMG approach \cite{bamg} to find approximations of the desired $n_b$ dominant singular vectors and values, and adaptively determine interpolation operators that approximately fit the singular vectors. We follow the approach described in \cite{kushnir}. For completeness and definiteness, we briefly describe all steps in the process, with some details filled in in subsequent sections.

We first describe the initial BAMG V-cycle. We start out on the finest level by choosing $n_t$ random test vectors for each of $u$ and $v$, and we place them in the columns of $U_t$ and $V_t$, respectively. We relax on the test vectors (using a few iterations of the SVD power method for \eq{svdFine}, see below) such that components with small $\sigma$ are damped and components with large $\sigma$ become dominant in the test vectors. We coarsen the finest grid (see below) and determine interpolation operators $P$, $Q$, where $P$ fits the vectors in $U_t$ (in a least-squares sense), such that they lie approximately in the range of $P$, and $Q$ fits the vectors in $V_t$, such that they lie approximately in the range of $Q$. We also build coarse-level operators $A_c$, $B_c$, and $C_c$ according to \eqs{coarseOp}.
We then restrict the fine-level $U_t$ and $V_t$ (by injection) to the first coarse level, and obtain coarse versions of the test vectors, stored in the columns of $U_{c,t}$ and $V_{c,t}$. We relax on $U_{c,t}$ and $V_{c,t}$ with the power method applied to \eqs{svdCoarse}.
The whole process of building new, coarser interpolation operators $P$ and $Q$ and operators $A_c$, $B_c$, $C_c$, by restricting $U_{c,t}$ and $V_{c,t}$ is then repeated recursively, up to some coarse level where the problem is small enough for a direct generalized SVD calculation.

On the coarsest level, we compute $n_b$ dominant singular triplets by a direct decomposition, and store them in vector
$\sigma_b$ and matrices $U_b$ and $V_b$. These singular triplets are the starting approximations for our desired dominant singular triplets, and will be improved in this cycle and subsequent cycles. We call the singular vectors of these triplets the boot (singular) vectors, and use the subscript $b$ to refer to them. (We distinguish these from the initially random test vectors in $U_t$ and $V_t$, which are used to get the process going and sustain it, but do not directly lead to the desired $n_b$ singular triplets themselves.) Note that we denote by $\sigma_b$ a vector with $n_b$ components that holds approximations for the dominant singular values sought. 

In the upward phase of the first BAMG V-cycle, starting from the coarsest level, we recursively interpolate the boot singular vectors $U_b$ and $V_b$ up to the next finer level, using the interpolation operators $P$ and $Q$ of the current level, according to multiplicative update formulas \eqs{ranges}. On each finer level, we first relax on the boot vectors using \eqs{svdCoarse} with the singular values in $\sigma_b$ fixed, and then update the elements of $\sigma_b$ by recalculating the Rayleigh quotient for each pair of boot vectors (see below). Note that the test vectors $U_t$ and $V_t$ are not used in the upward phase of the V-cycle.

This initial BAMG multiplicative V-cycle can be followed by several additional multiplicative V-cycles.
In the downward sweep of each of these additional cycles, one relaxes $U_t$ and $V_t$ as in the first V-cycle. In addition, one also relaxes the $U_b$ and $V_b$, and improves the $\sigma_b$ on each level, as in the upward sweep of the first cycle. At each level, the vectors in both $U_t$ and $U_b$ are used to fit $P$, and the vectors in both $V_t$ and $V_b$ to fit $Q$. Then $A_c$, $B_c$ and $C_c$ are also rebuilt using the new $P$ and $Q$.
The upward sweeps of the additional multiplicative cycles are the same as in the initial multiplicative cycle.
At the end of every V-cycle, we optionally also apply a collective Ritz projection step (see below) to improve the boot vectors $U_b$, $V_b$ and singular value approximations $\sigma_b$. We do so for the numerical tests reported in \sect{numres}.

Note that in this paper we use only the simplest type of multilevel cycles, namely, V-cycles. More sophisticated cycles including W-cycles and full multigrid (FMG) cycles \cite{mgtut,borzi,hetmaniuk,kushnir} can be considered and may lead to improved results, but for simplicity we only use V-cycles here. In the following sections we will give the details of the relaxation schemes, coarsest-level solve, coarsening and interpolation used in our BAMG cycles.

\subsection{Relaxation Scheme for the Test Vectors}
Seeking dominant singular triplets, we base relaxation for the initially random test vectors on the power method applied to \eq{svdGenSeparate}.
On any level, given an initial $u_j$, we solve for $v_j$ from
\begin{alignat}{1}
&A^t\, u_j =C \, \bar{v}_j, \nonumber\\
&v_j=\bar{v}_j/  (\bar{v}_j^t C \bar{v}_j)^{1/2},
\label{eq:power1}
\end{alignat}
and then for $u_j$ from
\begin{alignat}{1}
&A\, v_j =B \, \bar{u}_j, \nonumber\\
&u_j=\bar{u}_j/ (\bar{u}_j^t B \bar{u}_j)^{1/2}.
\end{alignat}
This can be repeated $\mu_t$ times on each level.
In practice, we solve for the new $\bar{v}_j$ and $\bar{u}_j$ in an inexact way, by performing $\mu_{t,J}$ inner iteration steps of weighted Jacobi with weight $\omega_J$. For example, for \eq{power1} we iterate on:
\begin{align}
\bar{v}_j^{(i+1)}=\bar{v}_j^{(i)}-\omega_J \, D_C^{-1} \, (C \, \bar{v}_j^{(i)} - A^t\, u_j)
\end{align}
with $\bar{v}_j^{(0)}=v_j$ initially and with the iteration index of the weighted Jacobi procedure indicated in superscript.
Here, $D_C$ is a diagonal matrix with the diagonal of the SPD matrix $C$ on its diagonal. In the numerical results reported in \sect{numres}, we use $\omega_J=0.7$ and $\mu_{t,J}=1$.
\subsection{Relaxation Scheme for the Boot Vectors and Update Formulas for the Singular Values}
\label{subsec:relaxBoot}
For the boot vectors, we relax on
\begin{alignat}{1}
A \, v &=\sigma \, B \, u + \kappa,
\label{eq:genSVDresid1} \\
A^t\, u &=\sigma \, C \, v + \tau.
\label{eq:genSVDresid2}
\end{alignat}
(Note that in the multiplicative phase $\kappa=0$ and $\tau=0$ on all levels, but the additive phase will require nonvanishing $\kappa$ and $\tau$, so we already include them in the formulation here.)
On any level, given an initial $\sigma_j$, $u_j$ and $v_j$, we solve for a new $u_j$ from \eq{genSVDresid1}, and then for a new $v_j$ from \eq{genSVDresid2}. This amounts to a block Gauss-Seidel (GS) scheme for equation system \ (\ref{eq:genSVDresid1})-(\ref{eq:genSVDresid2}). For dominant $\sigma$s, Eqs.\ (\ref{eq:genSVDresid1})-(\ref{eq:genSVDresid2})
\begin{align}
\left(
X-\sigma \,
Y
\right) \,
[u^t \ v^t]^t
=[\kappa^t \, \tau^t]^t,
\label{eq:big}
\end{align}
may be close to diagonally dominant, so this will work well in many cases. For some problems or on coarser levels, the block GS approach may not converge well, and Kaczmarz relaxation \cite{tanabe,kushnir} (see also below) on \eq{big} or its blocks may be preferable.
In our block GS approach, we again approximate the solutions of Eqs.\ (\ref{eq:genSVDresid1})-(\ref{eq:genSVDresid2}) in an inexact way, by performing $\mu_{b,J}$ inner iteration steps of weighted Jacobi.
For example, for \eq{genSVDresid1} we iterate on:
\begin{align}
u_j^{(i+1)}=u_j^{(i)}-\omega_J \, D_B^{-1} \, (B \, u_j^{(i)} - (A\, v_j-\kappa)/\sigma_j).
\end{align}
In the numerical results reported in \sect{numres}, we use $\mu_{b,J}=1$.
In the multiplicative phase, with $\kappa=0$, $\tau=0$ on all levels, we also update the $\sigma$s after every outer relaxation iteration on each level. The easiest way to do this is to use Rayleigh quotient formula
\begin{align}
\sigma=\frac{u^t A v}{(u^t B u)^{1/2} \, (v^t C v)^{1/2}}
\label{eq:Rayleigh}
\end{align}
for each boot singular triplet, which is what we do in the numerical results presented in \sect{numres}.
\subsection{Coarsest-grid Solution}
Each time the coarsest level is reached, we determine new approximations for the $n_b$ coarsest-level boot triplets by direct computation of the coarsest-level generalized SVD, \eq{genSVDDef}. The $n_b$ singular triplets with the largest singular values are selected as the new boot singular triplets. In our implementation, we choose to solve generalized eigenproblem \noeq{genSVD} of \thm{genSVD} using a direct eigendecomposition algorithm.
\subsection{Building $P$ and $Q$: Coarsening and Sparsity Patterns}
In order to build interpolation operators $P$ and $Q$, at each level, we first coarsen the sets of unknowns in $u \in I\!\!R^{m}$ and $v \in I\!\!R^{n}$ by choosing a set of $m_c$ coarse-grid variables, $C_u$, out of the $m$ fine-level variables for $u$, and by choosing a set of $n_c$ coarse-grid variables, $C_v$, out of the $n$ fine-level variables for $v$. The coarse variables are called coarse-grid points or C-points. The fine-level $u$-variables that are not selected as C-points are called F-points and are denoted by the set $F_u$. Similarly, the F-points of the fine-level $v$-variables are denoted by $F_v$. Well-known algorithms from AMG are used to determine $C_u$ and $C_v$ and the sparsity patterns of $P$ and $Q$ on each level, based on the idea of strength of connection in the operator matrices $A$.
After this coarsening process, the matrix elements of $P$ and $Q$ are determined using a least-squares approach in such a way that the test vectors $U_t$ and $V_t$ (and, after the initial cycle, also the boot vectors $U_b$ and $V_b$) lie approximately in the ranges of $P$ and $Q$, respectively.

For the coarsening process for the $u$-variables, we propose to apply standard AMG coarsening methods to matrix $A A^t$, and we base coarsening of the $v$-variables on $A^t A$. (If $A$ is square or square and symmetric, other choices can be made, see below.)

\begin{algorithm}[H]
\caption{one-pass Ruge-Stueben coarsening algorithm}
set $U$ $\leftarrow$ all fine-level points; $C$ $\leftarrow$ empty; $F$ $\leftarrow$ empty\;
for all fine-level points $i$, set $\lambda_i$ $\leftarrow$ number of points strongly influenced by $i$\;
\While{$U$ {\rm not empty}}
{
select one of the $i \in U$ that has a maximal $\lambda_i$\;
make $i$ a C-point ($C=C \cup i$, $U=U \setminus i$)\;
make all $j\in U$ that are strongly influenced by $i$, new F-points ($F=F \cup j$, $U=U \setminus j$)\;
increment $\lambda_k$ for all $k \in U$ that strongly influence the new F-points $j$\;
}
\label{alg:coarsening}
\end{algorithm}

We implement coarsening as follows.
For the $u$-variables we employ the standard one-pass Ruge-Stueben coarsening algorithm \cite{rs} (see \alg{coarsening}) on $N=A A^t$ using strength of connection condition
\begin{align}
\label{eq:strength}
{\rm variable} \ i {\rm \ is \ strongly} & \ {\rm influenced \ by \ variable} \ j \nonumber\\
&\Updownarrow \\
|n_{i,j}| \ge & \, \theta \, \sum_k \, |n_{i,k}| \nonumber
\end{align}
with $0<\theta<1$ a fixed strength parameter that may be chosen dependent on the problem. (The $(i,j)$ matrix element of $N$ is denoted by $n_{i,j}$.) 
For diagonally dominant PDE discretizations, strength is often determined relative to the largest off-diagonal element in row $i$, using condition $|n_{i,j}| \ge \, \theta \, \max_{k\ne i} \, |n_{i,k}|$. We, however, target a broader class of problem matrices, and opt for strength condition (\ref{eq:strength}), which is somewhat more general. Note, however, that the magnitude of strength parameter $\theta$ typically needs to be chosen differently in the two approaches.
For the $v$-variables, we determine strong connections in the same way, for matrix $A^t A$.
Once the strong connections are determined, coarsening can be performed: \alg{coarsening} is executed to determine sets of C-points and F-points for the $u$-variables and the $v$-variables.

In a next step, first for the $u$-variables, we determine, for each F-point $i$ in $F_u$, a coarse interpolatory set $C^i_u$ which contains all C-points (points in $C_u$) that strongly influence point $i$ according to condition (\ref{eq:strength}) in $A A^t$. The coarse interpolatory sets $C^i_v$ of the $v$-variable F-points are determined in the same way based on $A^t A$.
This defines the sparsity patterns of the interpolation operators $P$ and $Q$. 
We explain this for $P$, and it is analogous for $Q$. For each C-point in $C_u$ with fine-level index $i$, we let $\alpha(i)$ be the index of point $i$ on the coarse level. For all points $i$ in $C_u$, row $i$ in $P$ is zero, except for $p_{i,\alpha(i)}=1$. For all F-points $i$ in $F_u$ , row $i$ in $P$ is zero, except for matrix elements $p_{i,\alpha(j)}$ where $j$ is an element of $i$'s coarse interpolatory set $C^i_u$.

Basing coarsening of the $u$-variables and the $v$-variables on $A A^t$ and $A^t A$, respectively, can be motivated by the observation that, on the finest level, the left singular vectors are eigenvectors of $A A^t$, and the right  singular vectors are eigenvectors of $A^t A$. Moreover, $A A^t$ and $A^t A$ are symmetric matrices, and AMG was built for that type of matrices. In that sense, using $A A^t$ is a natural choice for measuring connection strength between $u$-variables. 
Also, forming $A A^t$ can be done in $O(m)$ (assuming $m \ge n$) time for large classes of sparse matrices, so it does not overly add to the cost of our method. Note also that we only use $A A^t$ for coarsening, and not in the rest of the algorithm, so there is no deterioration in terms of condition numbers, which is a major reason to avoid calculating the left singular vectors as the eigenvectors of $A A^t$, and the right singular vectors from $A^t A$). Note also that \eqs{svdGenSeparate2} suggest basing coarsening on $A^t \, B^{-1} \, A$ and $A \, C^{-1} \, A^t$ on coarser levels rather than $A^t A$ and $A A^t$, but we normally choose to ignore the $B^{-1}$ and $C^{-1}$ mass matrix factors to avoid the extra matrix inversion and matrix product. 

For some applications, however, it may be possible to devise good coarsening schemes for $u$ and $v$ directly from the rectangular matrix $A$, by considering its rows and columns. We expect, however, that the details and success of such strategies may be highly dependent on the type of problem, and direct coarsening methods for row-variables and column-variables of rectangular matrices is kept as an interesting topic of further research.

\subsection{Building $P$ and $Q$: Least-Squares Determination of Interpolation Weights}
We use a least-squares (LS) process to determine the interpolation weights in the rows of $P$ and $Q$ that correspond to F-points, following the approach in \cite{bamg,kushnir}. Again, we explain the process for matrix $P$, and it is analogous for $Q$.
We want to fit the interpolation weights of $P$ such that the $n_t$ current fine-level test vectors $U_t$ and the $n_b$ current boot vectors $U_b$ (except in the first cycle) lie approximately in the range of $P$. Let $U_f$ hold in its columns the $n_f=n_t+n_b$ vectors to be fitted. Let $u_k$ be the $k$th vector in $U_f$. Let $u_{k,c}$ be the coarse-level version of $u_k$ obtained by injection, and let $u_{k,c}^j$ be its value in coarse-level point $j$. Also, let $u_k^i$ be the value of $u_k$ in fine-level point $i$.
The weights of each F-point row in $P$ are determined consecutively using independent LS fits.
Consider a fixed F-point with fine-level index $i$ (the row index of $P$). Its coarse interpolatory set is $C^i_u$, and we assume now that the points in $C^i_u$ are labeled by their coarse-level indices (the column indices of $P$). Let $n_{c,i}$ be the number of elements of $C^i_u$.
For each F-point $i$ we solve the following least-squares problem to determine the unknown interpolation weights $p_{i,j}$:
\begin{align}
u_k^i=\sum_{j\in C^i_u} p_{i,j} u_{k,c}^j \quad (k=1,\ldots,n_f). 
\label{eq:LS}
\end{align}
This is a system of $n_f$ equations in $n_{c,i}$ unknowns.
We make this system overdetermined in all cases by choosing the number of initially random test vectors, $n_t$, larger than the expected largest interpolation stencil size $n_{c,i}$ for any $i$ on any level. (This is one of the criteria guiding the choice of $n_t$, and, in our implementation, estimating $n_t$ too small initially may require a restart of the method with a larger $n_t$).
Since we would like the dominant boot vectors to be fitted preferentially as soon as they become reasonable approximations, we weight the $k$th equation in \eq{LS} by the Rayleigh quotient, (\ref{eq:Rayleigh}), of the pair ($u_k$, $v_k$), see also \cite{kushnir}.
In our implementation, we solve the LS problem using a standard normal equation approach.
Finally, we mention that we use a modification of \eq{LS} for the case of minimal singular triplets or eigenpairs, as proposed in \cite{ibamg}. For these cases, interpolation weights and convergence can be improved significantly by applying an extra fine-level Jacobi relaxation (using the operator we base strength on) to the F-point values $u_k^i$ in \eq{LS} (but not the C-point values $u_k^j$), see \cite{ibamg} for further details. We have found in our numerical experiments that this modification is not useful when seeking dominant singular triplets or eigenpairs.
\section{AMG SVD Algorithm: Additive Phase}
\label{sec:add}
In the additive (solve) phase of our algorithm, we use fixed interpolation and coarse-level operators, namely, the operators $P$, $Q$, $A_c$, $B_c$ and $C_c$ as they were determined on all levels in the last mutiplicative cycle, and use an additive correction scheme to improve the $n_b$ boot singular triplets that came out of the multiplicative (setup) cycle at the finest level.
In each iteration of the additive phase, for each of the finest-level $\sigma_j$, $u_j$, $v_j$ ($1 \le j \le n_b$) in $\sigma_b$, $U_b$, $V_b$, we first improve $u_j$ and $v_j$ in a classical-type additive AMG V-cycle with $\sigma_j$ fixed in the whole cycle. Then, after all the $u_j$ and $v_j$ have been updated using one V-cycle for each pair, we collectively improve all the $\sigma_j$, $u_j$ and $v_j$ in $\sigma_b$, $U_b$, $V_b$ using a Ritz projection step on the finest level. These multigrid-Ritz iterations are repeated until the desired accuracy is reached. Our solve phase is similar to the approach described by Borzi and Borzi in \cite{borzi} for calculating minimal eigenpairs of an SPD matrix using standard AMG interpolation operators (it is also described in \cite{kushnir}, but not combined with a multiplicative phase). We now extend this approach to the calculation of dominant SVD triplets using the self-learned operators from the multiplicative phase of the algorithm.
\subsection{Coarse-level Equations}
In the additive correction scheme, the equations for triplet $(\sigma_j,u_j,v_j)$ on the current level are given by
\begin{alignat}{1}
&A \, v_j -\sigma_j \, B \, u_j = \kappa_j, \nonumber \\
&A^t\, u_j - \sigma_j \, C \, v_j = \tau_j,
\label{eq:add}
\end{alignat}
where $\kappa_j$ and $\tau_j$ are the residuals restricted down from the next finer level.
(So $\kappa_j=0$ and $\tau_j=0$ on the finest level.) 

The equations on the next coarser level are then
\begin{alignat}{1}
A_c \, v_{j,c} -\sigma_j \, B_c \, u_{j,c}= P^t \, r_j, \nonumber \\
A_c^t \, u_{j,c} -\sigma_j \, C_c \, v_{j,c} = Q^t \, s_j,
\label{eq:addCoarse}
\end{alignat}
where $r_j$ and $s_j$ are the residual vectors of the first and second fine-level equations, respectively,
and the coarse-grid correction equations are given by
\begin{alignat}{1}
&u^{(i+1)}_j=u^{(i)}_j+P\, u_{j,c},\nonumber \\
&v^{(i+1)}_j=v^{(i)}_j+Q\, v_{j,c},
\label{eq:corrAdd}
\end{alignat}
where the superscript $(i)$ means the $i$th iterate.
Note that $u_{j,c}$ and $v_{j,c}$ now represent coarse-level additive errors of the fine-level quantities $u_j$ and $v_j$.
Rather than using new variable names to distinguish original variables an their coarse-level errors, we follow the convention that is common in the multigrid literature \cite{mgtut} to refer to variables and their coarse-level errors with the same letter from the alphabet, which aids in presenting the algorithm in a recursive way.

Note that for the eigenvalue solvers in \cite{borzi, kushnir} the additive method is described in the framework of the full approximation scheme (FAS), like in the paper in which the general ideas of this approach were originally proposed \cite{mgeigen}, where the FAS framework was required because eigenvalue approximations were modified on the coarsest level of each cycle.
However, in the additive methods in \cite{borzi,kushnir}, eigenvalues remain fixed for the entire additive cycle, so there is no need to use the FAS, and the simpler error equation formulation that is common in multigrid for linear operators can be used instead, which is what we do in our discussion here.
\subsection{Additive V-cycles to Improve the Left and Right Singular Vectors}
For each of the finest-level $\sigma_j$, $u_j$, $v_j$ ($1 \le j \le n_b$) in $\sigma_b$, $U_b$, $V_b$, we fix $\sigma_j$
and perform an additive V-cycle as follows. We relax the singular vectors $u_j$ and $v_j$ using \eq{add} on the finest level, with the relaxation method that was described in \subsec{relaxBoot}. We calculate the residuals $\kappa_j$ and $\tau_j$, and restrict them to the next coarser level. We then choose a zero initial guess for $u_{j,c}$ and $v_{j,c}$ and relax them using coarse equations (\ref{eq:addCoarse}), we calculate the coarse residuals, restrict them to the next coarser level, etc. This is repeated recursively up to some coarse level where the problem is small enough for a direct solve. On the coarsest level, we solve \eq{addCoarse} exactly for vector $[u_{j,c}^t \, v_{j,c}^t]^t$ (as in \eq{big}). To make the coarse-level solve somewhat more robust when the operator is close to singular, one can optionally use the pseudo-inverse (calculated via the SVD) of $X-\sigma\,Y$ without including the component corresponding to its smallest singular value, as suggested in \cite{fly}. We do so in the numerical results presented in \sect{numres}.
We then interpolate the coarsest-grid solution up, correct using \eqs{corrAdd}, relax the corrected vectors, interpolate up again, etc., recursively until the finest level.
\subsection{Ritz Projection Step on the Finest Level to Improve the Boot Singular Triplets}
After carrying out one V-cycle for each of the $n_b$ boot singular triplets, we perform a Ritz projection step, as in \cite{borzi,kushnir}. An alternative would be to update each $\sigma_j$ in $\sigma_b$ using Rayleigh quotient formula (\ref{eq:Rayleigh}). However, a collective Ritz step leads to faster overall convergence, and has other important advantages. For singular values with multiplicity larger than one, it provides orthogonal singular vectors, and it precludes convergence of some of the triplets in the finest-level $\sigma_b$, $U_b$ and $V_b$ to spurious duplicate triplets, which may occur with the $\sigma$s updated individually according to \eq{Rayleigh}.

The Ritz step proceeds as follows.
We first orthogonalize the columns of $U_b$ with respect to $B$ using the QR decomposition,
and we orthogonalize the columns of $V_b$ with respect to $C$.
(Note that $B=I_m$ and $C=I_n$ on the finest level, but, in the multiplicative phase, the Ritz procedure can in principle also be employed on coarser levels, so we prefer to give the more general equations here.)
Let $\hat{U}$ and $\hat{V}$ be the orthogonalizations of $U_b$ and $V_b$, and 
let $\mU={\rm span}(\hat{U})$ and $\mV={\rm span}(\hat{V})$.
We seek new $u_j \in \mU$, $v_j \in \mV$, and $\sigma_j$ ($1 \le j \le n_b$) such that
\begin{alignat}{2}
& \left< u , A \, v_j - \sigma_j \, B \, u_j  \right>_B = 0 \quad & \forall u \in  \mU, \nonumber \\
& \left< v , A^t \, u_j - \sigma_j \, C \, v_j  \right>_C = 0 \quad & \forall v \in  \mV.
\label{eq:Ritz}
\end{alignat}
These equations express that the residuals are desired to be orthogonal ($B$-orthogonal and $C$-orthogonal, respectively) to the spaces $\mU$ and $\mV$ in which we seek an improved approximation.
\eq{Ritz} can be expressed in terms of new variables $y, y_j \in I\!\!R^{m_c}$ and 
$z, z_j \in I\!\!R^{n_c}$ with $u=\hat{U}\,y$, $v=\hat{V}\,z$, $u_j=\hat{U}\,y_j$ and $v_j=\hat{V}\,z_j$,
as
\begin{alignat}{2}
& \left< y , \hat{U}^t \, A \, \hat{V} \, z_j - \sigma_j \, \hat{U}^t \, B \, \hat{U} \, y_j  \right> = 0 \quad & \forall y \in I\!\!R^{m_c},  \nonumber \\
& \left< z , \hat{V}^t \, A^t \, \hat{U} \, y_j - \sigma_j \, \hat{V}^t \, C \, \hat{V} \, z_j  \right> = 0 \quad & \forall z \in  I\!\!R^{n_c}.
\label{eq:RitzSmall}
\end{alignat}
The following generalized eigenvalue problem of size $2\, n_b \times 2\, n_b$ results
\begin{align}
\left(
\left[
\begin{array}{cc}
0 & \hat{U}^t \, A \, \hat{V}\\
\hat{V}^t \, A^t \, \hat{U} & 0
\end{array}
\right]
-\sigma_j \,
\left[
\begin{array}{cc}
\hat{U}^t \, B \, \hat{U} & 0\\
0 &  \hat{V}^t \, C \, \hat{V}
\end{array}
\right]
\right) \,
\left[
\begin{array}{c}
y_j\\
z_j
\end{array}
\right]
=0.
\label{eq:genSVDRitz}
\end{align}
According to \thm{genSVD}, the eigenvalues of \eq{genSVDRitz} occur in pairs symmetrically about zero, and it is sufficient to consider the $n_b$ triplets $(\sigma_j,y_j,z_j)$ with the largest values for $\sigma_j$ to generate new approximations $(\sigma_j,\hat{U} \, y_j, \hat{V} \, z_j)$
for the dominant singular triplets on the finest level.

Note finally that, unlike the multiplicative cycles, the multigrid-Ritz additive iterations can converge to any required accuracy, even though, on each level, the $u_j$ are not exactly in the range of the $P$s, and the $v_j$s are not exactly in the range of the $Q$s. In practice, as demonstrated in the numerical tests below, the hybrid multiplicative-additive scheme converges up to machine accuracy if desired.
\section{AMG SVD Algorithm: Specialization and Extension}
\label{sec:spec}
In this section we discuss the specialization of the dominant singular triplet algorithm for rectangular matrices to the case of square matrices and symmetric matrices (dominant eigenpairs), and its extension to the case of minimal singular triplets (and eigenpairs).

\subsection{Singular Triplets of Square Matrices}
A possible simplification for square, nonsymmetric matrices is that interpolation operators $P$ and $Q$ could potentially be based on $A$ and/or $A^t$; it does not appear to be necessary to form $A A^t$ and $A^t A$, so that cost may be saved. Interestingly, if one wants to keep square matrices on all levels, coarsening and sparsity patterns for $P$ and $Q$ should both be based on either $A$ or $A^t$, because coarsening of $A$ and $A^t$ may lead to different numbers of coarse grid points (except if a coarsening method is used that is symmetric).
If the left and right singular vectors are expected to be very similar such that they can all be fitted with reasonable accuracy by one interpolation operator, $P$ and $Q$ could even be taken the same on all levels ; in that case it would also hold that $B_c=C_c$ on all levels, which can be exploited for further cost savings.
\subsection{Eigenpairs of Symmetric Matrices}
In the case of symmetric matrices, the whole algorithm simplifies significantly, and becomes a combination of the minimal SPD eigenpair algorithms of \cite{borzi} and \cite{kushnir}, extended to dominant eigenpairs.
The resulting algorithm can be formulated in terms of operators $A$, $B$ and $P$ on all levels.
This combination of a multiplicative and an additive scheme into a hybrid method for eigenpairs
has the advantages that it can converge up to machine accuracy for multiple eigenvectors with one $P$, and that it is self-learning.
\subsection{Minimal Singular Triplets and Minimal Eigenpairs}
With just a few small modifications, the hybrid multiplicative-additive dominant singular triplet algorithm described above can also be used to compute the $n_b$ singular triplets with smallest singular values. All that is required is to modify the relaxation schemes, and to select the smallest singular triplets as new boot singular triplets in the coarsest-level solve of the multiplicative phase. The weights in the LS fitting of the test and boot vectors is taken as the inverse of the Rayleigh quotient, see also
\cite{kushnir,bamgQcd,bamgMarkov}.
For the relaxation of the $n_t$ initially random test vectors in $U_t$ and $V_t$, we iterate on 
\eqs{svdGenSeparate} with $\sigma=0$ using Kaczmarz relaxation
(see \cite{tanabe,kushnir}). Richardson iteration as in \cite{nonsym} can be considered as another option for relaxation.
For the relaxation of the $n_b$ boot vectors in $U_b$ and $V_b$, we iterate on \eqs{genSVDresid1}-\noeq{genSVDresid2} (with the small $\sigma$s from $\sigma_b$) in a block GS fashion using Kaczmarz relaxation \cite{tanabe,kushnir} for the blocks.
Numerical tests show that these Kaczmarz relaxations may sometimes result in singular vector pairs that produce a negative Rayleigh quotient. We test for this and reverse the sign of one of the singular vectors if this happens.
In the case of minimal eigenpairs of symmetric matrices, GS relaxation on $A\,x=0$ can be used, with Kaczmarz on coarser levels, see \cite{kushnir}. In the numerical results reported below, we use Kaczmarz relaxation on all levels when seeking minimal singular triplets or eigenpairs. Note also that, since our method is self-learning, the minimal SPD eigenpair problem can in principle also be solved simply by shifting the operator such that the spectrum ends up at the other side of the origin, and then the algorithm for dominant eigenpairs can be used (and vice versa).

\section{Numerical results}
\label{sec:numres}

In this section, we present numerical results illustrating how our proposed method performs. We discuss four different test problems that cover the different cases of rectangular matrices, square nonsymmetric matrices, and symmetric matrices.

\subsection{High-Order Finite Volume Element Laplacian on Unit Square}
\label{subsec:FVE}
In the first test problem, we seek a few extremal singular triplets of a square, nonsymmetric matrix that results from a finite volume element (FVE) discretization with quadratic polynomials of the standard Laplacian operator on the unit square with Dirichlet boundary conditions, see \cite{vogel,fve}. The operator is discretized on a structured triangular grid. For this problem, the FVE method with linear polynomials gives a discretization that is exactly the same as the Galerkin finite element discretization with linear polynomials. For higher orders, however, the FVE discretization is slightly non-symmetric.

\begin{figure}[!htbp]
  \centering
  \scalebox{0.5}{\includegraphics{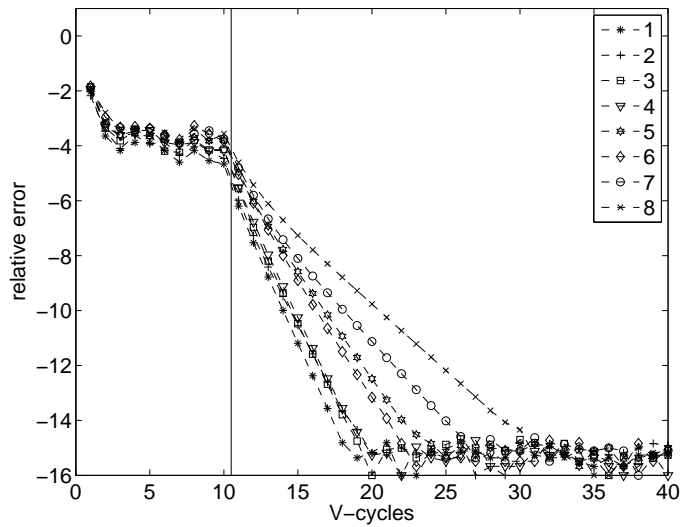}}
   \caption{Largest Singular Values for High-Order Finite Volume Element Laplacian on Unit Square (square, nonsymmetric matrix). Convergence plot for calculation of the eight largest singular values (base-10 logarithm of relative error in singular values as a function of number of V-cycle iterations). Singular values are labeled with decreasing magnitude (label 1 denotes the largest singular value). The 10 V-cycles to the left of the vertical line are multiplicative, and the 30 V-cycles to the right of the vertical line are additive.} 
\label{fig:FVElg}
\end{figure}    

Figs.\ \ref{fig:FVElg} and \ref{fig:FVEsm} show convergence results for approximating the largest and smallest singular values, respectively, for a matrix with $m=n=961$ ($31\times31$ internal grid points). We show the base-10 logarithm of the relative error in the calculated singular values
\begin{align}
error=\frac{|\sigma_{exact}-\sigma_{approx}|}{\sigma_{exact}},
\label{eq:error}
\end{align}
as a function of the number of V-cycles. Here, the values $\sigma_{exact}$ are high-accuracy approximations obtained by Matlab's built-in SVD algorithms. There are 10 multiplicative (setup) cycles followed by 30 additive (solve) cycles.
We have calculated $n_b=8$ dominant or minimal singular triplets, using $n_t=5$ initially random test vectors. We used 
$\mu_t=4$ relaxations on the test vectors, and $\mu_b=4$ relaxations on the boot vectors, on all levels. The coarsening strength parameter was chosen as $\theta=0.05$. Coarsening and sparsity patterns for both $P$ and $Q$ are determined using $A$, thus guaranteeing square matrices $A$ on all levels.

The figures show that the extremal singular triplet algorithm carries out the task that is was designed for: it collectively calculates several singular values up to machine accuracy in a modest number of multigrid V-cycles, and this both for the dominant triplet and the minimal triplet case. The initial, multiplicative phase approximately determines singular triplets starting from initially random test vectors, but convergence stagnates after a few operations because it is limited by the accuracy by which the singular vectors are represented collectively by single interpolation operators. A second, additive phase succeeds in driving the error to machine accuracy, using the (fixed) interpolation operators that were derived in the last multiplicative iteration. This shows that the approach is able to fit interpolation to the relevant vectors both for the cases of dominant and minimal triplets.

For conciseness, we will limit ourselves to plot the relative errors in singular values or eigenvalues in this paper. Convergence of these properties goes along with high-accuracy convergence of other quantities like residuals, angles between exact and approximate singular vectors, orthogonality measures between singular vectors, etc. All these quantities also converge with high accuracy in our numerical tests, but they are not shown for conciseness.
Since our code is implemented in Matlab and is not optimized, we do not directly compare with other, optimized codes in terms of CPU time, but instead focus on reporting convergence numbers as a function of numbers of V-cycle iterations, which gives valuable insight in the effectiveness of our method, since the cost of a V-cycle is approximately linear in the number of unknowns, $m+n$.

For the case of dominant singular triplets (\fig{FVElg}), the calculation uses four levels, with coarsest size $45\times45$. For the case of minimal singular triplets, five levels were obtained, with a coarsest grid of size $51\times51$.
See \tabl{values} for approximations of the singular values calculated. It can be seen that the singular values lie very close to each other, which makes this a difficult type of problem for many iterative singular value decomposition algorithms. Nevertheless, our algorithm converges to machine accuracy in a moderate number of V-cycles.
Note also that the non-symmetry of the discrete operator has lifted the degeneracy of the continuous operator, which has eigenvalues with multiplicity larger than one; no singular values with multiplicity larger than one arise.
                                                                                         
\begin{figure}[!htbp]
  \centering
  \scalebox{0.5}{\includegraphics{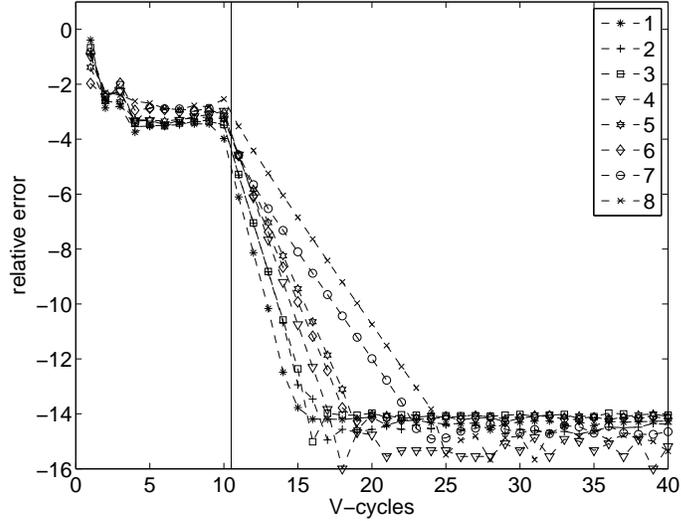}}
   \caption{Smallest Singular Values for High-Order Finite Volume Element Laplacian on Unit Square (square, nonsymmetric matrix). Convergence plot for calculation of the eight smallest singular values (base-10 logarithm of relative error in singular values as a function of number of V-cycle iterations). Singular values are labeled with increasing magnitude (label 1 denotes the smallest singular value). The 10 V-cycles to the left of the vertical line are multiplicative, and the 30 V-cycles to the right of the vertical line are additive.} 
\label{fig:FVEsm}
\end{figure}    

\begin{table}[h!]
    \begin{center}
        \begin{tabular}{|l|l|l|l|l|l|l|}\hline
FVE lge&FVE sm&FD lge&FD sm&Graph lge&Graph sm&Term-Doc \\ \hline
7.9791546 & 0.01924183&7.9818877&0.01811231&13.509036&0.01000000&84.148337 \\ \hline
7.9491729& 0.04794913&7.9548012&0.04519876&13.352613&0.03456116&64.707532 \\ \hline
7.9468326& 0.04801773&7.9548012&0.04519876&13.350454&0.03901593&55.976437 \\ \hline
7.9172573& 0.07655365&7.9277148&0.07228521&12.472837&0.07966567&50.265499 \\ \hline
7.8965349& 0.09557904&7.9099298&0.09007021&12.416200&0.09490793&49.265360 \\ \hline
7.8960066& 0.09558103&7.9099298&0.09007021&11.874669&0.09918138&45.242034 \\ \hline
7.8692955& 0.12359047&7.8828433&0.11715666&&&44.400811 \\ \hline
7.8616683& 0.12415144&7.8828433&0.11715666&&&41.772394 \\ \hline
        \end{tabular}
    \end{center}
    \caption{Singular values and eigenvalues sought for each problem (high-accuracy approximations).}
    \label{tab:values}
\end{table}

\begin{figure}[!htbp]
  \centering
  \scalebox{0.5}{\includegraphics{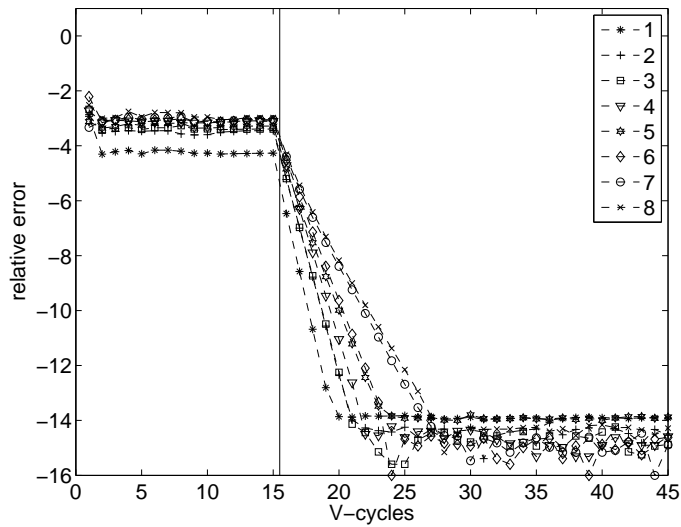}}
   \caption{Smallest Eigenvalues for Finite Difference Laplacian on Unit Square (square, symmetric matrix). Convergence plot for calculation of the eight smallest eigenvalues (base-10 logarithm relative error in eigenvalues as a function of number of V-cycle iterations). Eigenvalues are labeled with increasing magnitude (label 1 denotes the smallest eigenvalue). The 15 V-cycles to the left of the vertical line are multiplicative, and the 30 V-cycles to the right of the vertical line are additive.} 
\label{fig:FDsm}
\end{figure}    
  
\subsection{Finite Difference Laplacian on Unit Square}
We now consider the case of a simple finite-difference (FD) Laplacian with Dirichlet boundary conditions discretized with a 5-point stencil on a unit square with a Cartesian grid. This leads to a symmetric matrix (it is SPD), and we seek minimal and dominant eigenpairs. We use strength of connection $\theta=0.06$ and seek $n_b=8$ minimal or dominant eigenpairs, using $n_t=6$ initially random test vectors. We used $\mu_t=8$ relaxations on the test vectors, and $\mu_b=4$ relaxations on the boot vectors. We perform 15 multiplicative cycles followed by 30 additive cycles. The problem size is $m=n=1024$ ($32\times32$ internal grid points). \tabl{values} shows that there are eigenvalues with multiplicity larger than one for this symmetric discretization.

\fig{FDsm} shows convergence results for the case of minimal eigenpairs. Five levels are used and the coarsest grid is of size $64\times 64$. These results can be compared with the results of the additive-only eigenvalue method of Borzi and Borzi (\cite{borzi}) and the multiplicative-only eigenvalue method of Kushnir, Galun and Brandt (\cite{kushnir}). 
Our additive phase is like the method in \cite{borzi}, but in that paper standard AMG interpolation is used. We appear to get similar results, but our method is more general and can also be applied to seeking dominant eigenpairs and to a wider set of problems due to its self-learning capacity. Our multiplicative phase is like the method in \cite{kushnir}. We see that convergence stagnates at the level of accuracy by which interpolation collectively represents the desired eigenvectors. (Note that in our combined algorithm it would have been sufficient to perform less than 15 multiplicative cycles.) In \cite{kushnir} interpolation is made more accurate to improve the accuracy level at which the collective multiplicative phase stagnates. As explained in that paper, the accuracy that can be obtained in this way may be sufficient for some applications, for example, due to unavoidable discretization errors in PDE problems, or due to data and model uncertainties in data analysis tasks. In our approach, we show that, if desired, higher accuracy can be obtained by combining the multiplicative and additive approaches, resulting in a method that is flexible enough to deal efficiently with a variety of problems due to its self-learning capabilities.
\begin{figure}[!htbp]
  \centering
  \scalebox{0.5}{\includegraphics{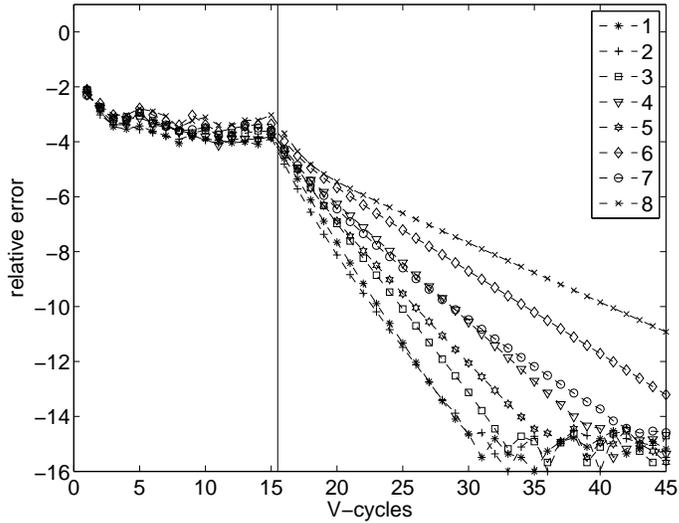}}
   \caption{Largest Eigenvalues for Finite Difference Laplacian on Unit Square (square, symmetric matrix). Convergence plot for calculation of the eight largest eigenvalues (base-10 logarithm of relative error in eigenvalues as a function of number of V-cycle iterations). Eigenvalues are labeled with decreasing magnitude (label 1 denotes the largest eigenvalue). The 15 V-cycles to the left of the vertical line are multiplicative, and the 30 V-cycles to the right of the vertical line are additive.} 
\label{fig:FDlg}
\end{figure}    
\fig{FDlg} gives convergence results for the case of dominant eigenpairs. Four levels are used and the coarsest grid is of size $52\times52$. The results show that our hybrid multiplicative-additive method can also compute dominant eigenpairs, extending the approaches for minimal eigenpairs from \cite{borzi,kushnir} to dominant eigenpairs. Convergence in the additive phase appears somewhat slower than for the minimal eigenpairs case. This may be due to the fact that we employ Kaczmarz relaxation for the minimal eigenpairs, which is more efficient but also more expensive than the inexact power method relaxation used for the dominant eigenpairs case (\subsec{relaxBoot}).
It is interesting to note that the approach in \cite{borzi} which uses standard AMG interpolation, can also be extended to calculating dominant eigenpairs simply by changing the signs of all off-diagonal interpolation weights. The resulting interpolation operators turn out to be good fits for the most oscillatory modes, and can be used in an additive scheme to approximate the dominant eigenpairs.

\subsection{Planar Random Triangulation Graph Laplacian}
\begin{figure}[!htbp]
  \centering
  \scalebox{0.5}{\includegraphics{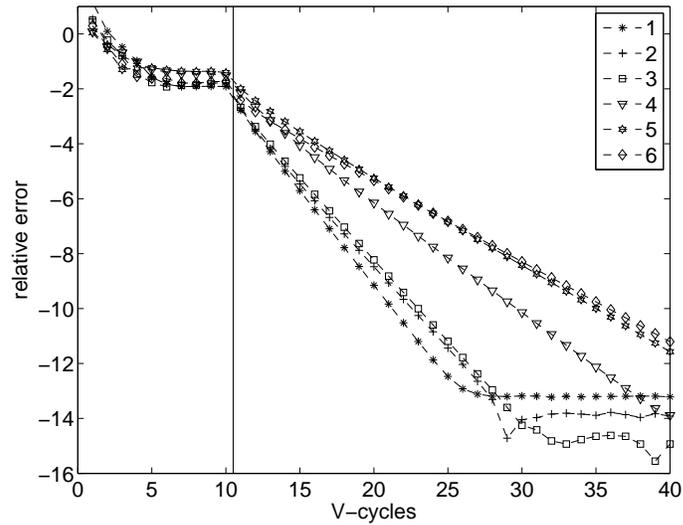}}
   \caption{Smallest Eigenvalues for Planar Random Triangulation Graph Laplacian (square, symmetric matrix). Convergence plot for calculation of the six smallest eigenvalues (base-10 logarithm of relative error in eigenvalues as a function of number of V-cycle iterations). Eigenvalues are labeled with increasing magnitude (label 1 denotes the smallest eigenvalue). The 10 V-cycles to the left of the vertical line are multiplicative, and the 30 V-cycles to the right of the vertical line are additive.} 
\label{fig:Graphsm}
\end{figure}    
\begin{figure}[!htbp]
  \centering
  \scalebox{0.5}{\includegraphics{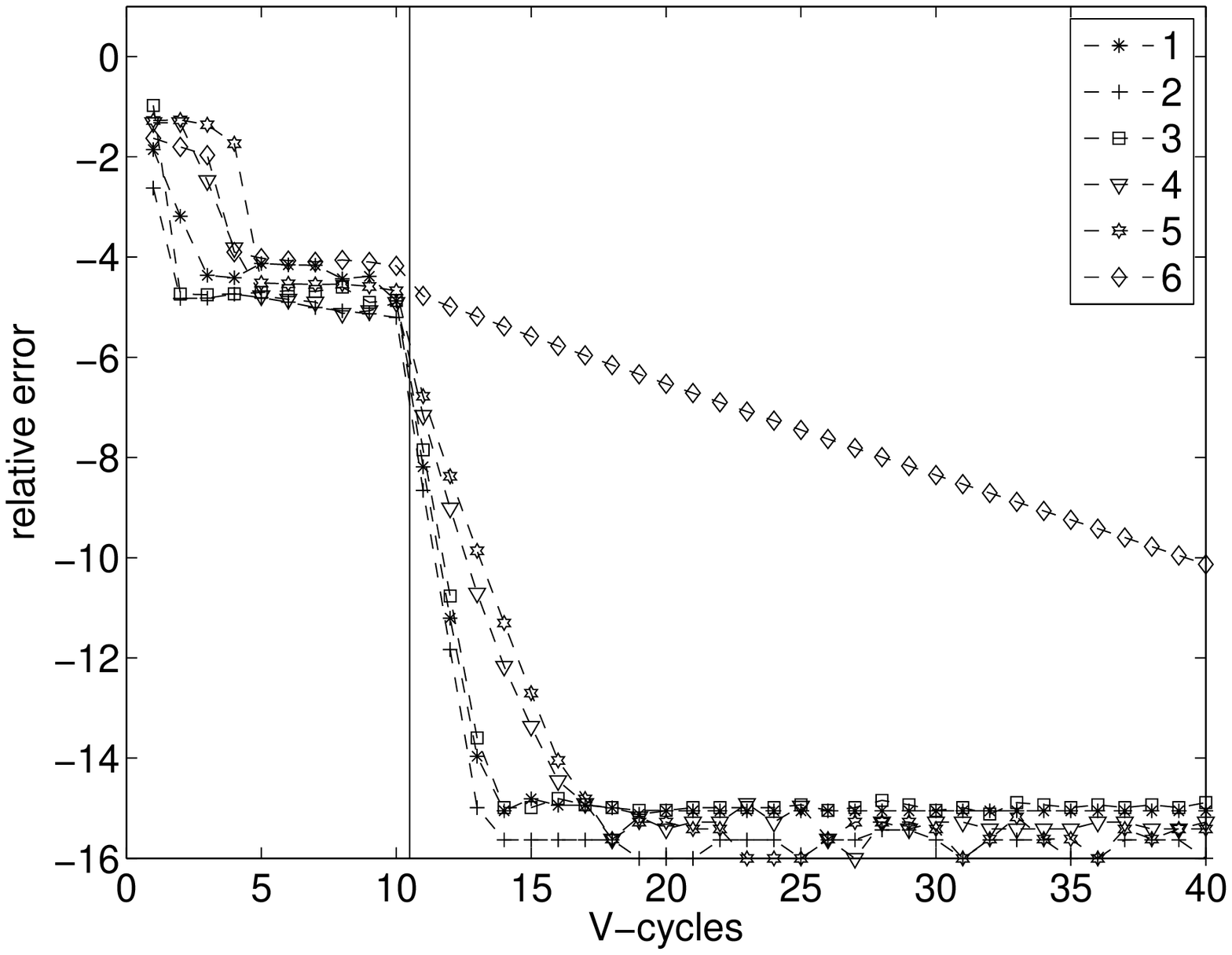}}
   \caption{Largest Eigenvalues for Planar Random Triangulation Graph Laplacian (square, symmetric matrix). Convergence plot for calculation of the six largest eigenvalues (base-10 logarithm of relative error in eigenvalues as a function of number of V-cycle iterations). Eigenvalues are labeled with decreasing magnitude (label 1 denotes the largest eigenvalue). The 10 V-cycles to the left of the vertical line are multiplicative, and the 30 V-cycles to the right of the vertical line are additive.} 
\label{fig:Graphlg}
\end{figure}    
\begin{figure}[!htbp]
  \centering
  \scalebox{0.5}{\includegraphics{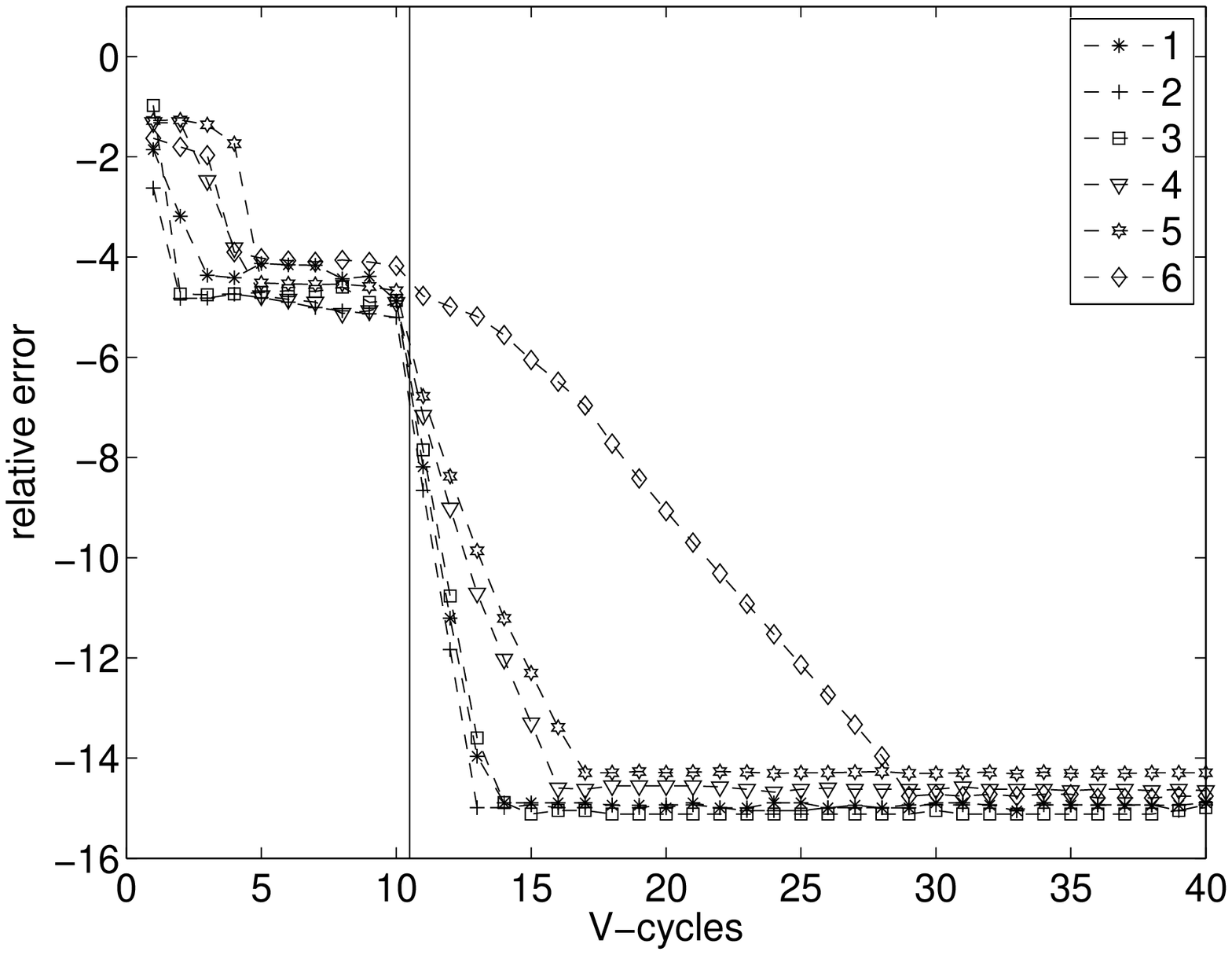}}
   \caption{Same as \fig{Graphlg}, but during the additive phase, whenever one or more of the eigenvalues reach a relative error converge tolerance of 1e-14, the interpolation operators are redetermined and preferentially fitted to the unconverged eigenpairs. This improves the convergence of the eigenpair that is slow to converge in \fig{Graphlg}.} 
\label{fig:GraphlgRedoP}
\end{figure}    

The next test problem is the graph Laplacian operator of a planar random graph that is obtained by placing points uniformly random in the unit square and determining their Delauney triangulation graph. With $\hat{A}$ the adjacency matrix of the graph, the graph Laplacian, $A$, can be constructed by setting $A=-\hat{A}$ and placing the row sums of $\hat{A}$ on the diagonal. This results in a symmetric semi-definite matrix (it has one vanishing eigenvalue), and we seek dominant and minimal eigenpairs. This problem is interesting as a test case because it is unstructured, contrary to the previous two problems which derive from structured grids. Graph Laplacian matrices are of interest in data analysis tasks \cite{kushnir}. We use strength of connection $\theta=0.05$ and seek $n_b=6$ dominant or minimal eigenpairs, using $n_t=6$ initially random test vectors. We use $\mu_t=1$ relaxations on the test vectors, and $\mu_b=8$ relaxations on the boot vectors. We perform 10 multiplicative cycles followed by 30 additive cycles. The problem size is $m=n=1024$.

\fig{Graphsm} shows convergence results for the case of minimal eigenpairs. Three levels are used and the coarsest grid is of size $59\times 59$. The operator is shifted by 0.01 to avoid problems in representing the relative error in the smallest eigenvalue (which vanishes for the unshifted operator). \fig{Graphsm} shows satisfactory convergence behavior, but convergence in the additive phase is not as good as for the finite difference Laplacian on a structured grid (\fig{FDsm}), even though we doubled $\mu_b$ to 8. This is most likely due to the fact that the minimal eigenvectors of the unstructured problem are less regular and less similar to each other, such that they are not represented as well by the collective interpolation operators.
For this reason, we only sought six eigenpairs for this problem.
We reduced the number of test vector relaxations because the eigenvalues are less clustered for this problem, and too many test vector relaxations quickly make the set of test vectors too linearly independent for the LS fits. \fig{Graphlg} gives convergence results for the case of dominant eigenpairs. Three levels are used and the coarsest grid is of size $77\times77$. It can be seen that the algorithm converges slowly for the sixth eigenpair. When one or more of the eigenpairs sought converge significantly more slowly than the others, the following strategy can be followed to improve their convergence. In the additive phase, once some eigenpairs have converged beyond a pre-specified tolerance, one can redetermine the interpolation operators in a way to preferentially fit the eigenpairs that have not converged yet. \fig{GraphlgRedoP} shows that this can improve the convergence of lagging eigenpairs. For the convergence curves shown in \fig{GraphlgRedoP}, whenever one or more of the singular values reach a relative error converge tolerance of 1e-14, we redetermine the interpolation operators (basically, by executing one downward sweep of the multiplicative phase), and reduce the weight of the already converged boot vectors and the test vectors by a factor of 1\,000 in the LS fitting process. This can speed up the convergence of the remaining eigenpairs, as shown in \fig{GraphlgRedoP}.

\subsection{Medline Term-document Matrix}
The final test matrix is a real term-document matrix, namely, the MEDLINE data set downloaded from the Text to Matrix Generator website (http://scgroup20.ceid.upatras.gr:8000/tmg). The rows of this matrix represent terms and the columns represent documents. Matrix element $(i,j)$ counts how many times term $i$ occurs in document $j$. The matrix is sparse (less than 1\% nonzeros). Latent semantic indexing determines concepts in documents by calculating dominant singular triplets of term-document matrices \cite{lsi}, so we seek to compute dominant singular triplets. We consider a rectangular submatrix of size $m=5\,735$, $n=1\,033$.
We use strength of connection $\theta=0.03$ and seek $n_b=8$ dominant singular triplets, using $n_t=14$ initially random test vectors. We used $\mu_t=1$ relaxations on the test vectors, and $\mu_b=4$ relaxations on the boot vectors. We perform 3 multiplicative cycles followed by 30 additive cycles.

\fig{TDlg} shows convergence results for approximating the eight dominant singular triplets. The calculation uses five levels, and the coarsest grid is of size $415\times198$. The figure shows that our method is successful in calculating the eight dominant singular triplets, with good convergence and high accuracy. The importance of this proof-of-concept calculation is that it indicates that our approach is flexible enough to deal with this kind of problem that is new to multigrid (as far as we are aware). The self-learning feature of our method is able to adapt to the singular vectors that are relevant in this application, which is interesting by itself, since our development is an extension of algebraic multigrid concepts that were developed for PDEs, in which the relevant vectors are of a different nature. Similarly, we have obtained the result in \fig{TDlg} using a standard PDE-oriented AMG coarsening approach, and obtain results that appear to converge quite satisfactorily. It has to be noted, though, that the dominant singular values of term-document matrices may have larger gaps (see \tabl{values}), especially for the very largest ones, which may make these problems somewhat easier for iterative methods than, for example, the FVE problem of \subsec{FVE}, which has small gaps between the dominant (and minimal) singular values that decrease with increasing problem size. While we expect our method to be competitive for the latter type of problems, it remains to be investigated in future work how competitive our general approach can be made for problems like term-document matrices. For one, it would require to consider dedicated special-purpose coarsening methods. (We have already developed such special-purpose coarsening mechanisms for certain scale-free graphs, see \cite{leaf}, and see also
\cite{cr,relcoarse} for promising more general approaches.) In the case of rectangular matrices, it may be possible to come up with methods to coarsen the row and column variables based on $A$ and $A^t$ directly (rather than using $A\,A^t$ and $A^t\,A$), which may be feasible for some applications, guided by the application-dependent interpretation of the variables and operator matrix coefficients, and is kept for future work.
Nevertheless, the proof-of-concept results presented here already show promise and illustrate the versatility of our general approach to calculating singular triplets.
\begin{figure}[!htbp]
  \centering
  \scalebox{0.5}{\includegraphics{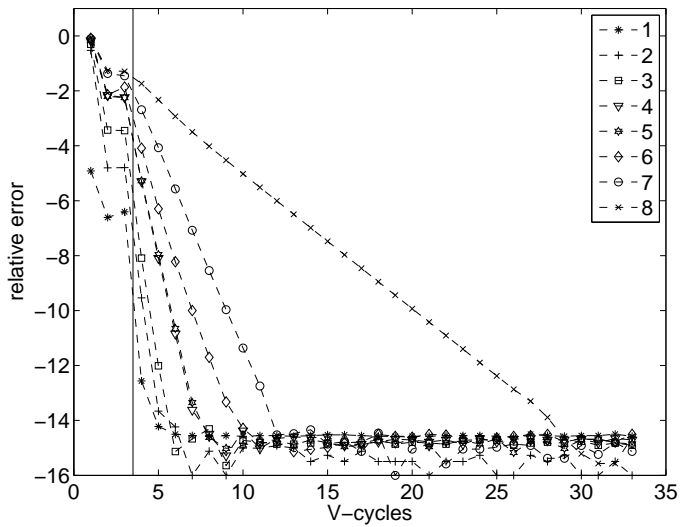}}
   \caption{Largest Singular Values for Medline Term-document Matrix (rectangular). Convergence plot for calculation of the eight largest singular values (base-10 logarithm of relative error in singular values as a function of number of V-cycle iterations). Singular values are labeled with decreasing magnitude (label 1 denotes the largest singular value). The 3 V-cycles to the left of the vertical line are multiplicative, and the 30 V-cycles to the right of the vertical line are additive.} 
\label{fig:TDlg}
\end{figure}    

\subsection{Discussion}
The above numerical results show that the proposed combined multiplicative-additive approach is successful in calculating extremal singular triplets and eigenpairs, with high accuracy obtained in a modest number of V-cycles for a variety of problems. However, more research needs to be done to make the method more black-box and robust. There are quite a few parameters to be chosen, and success is sometimes sensitive to careful choice of these parameters. This is not unlike the situation that existed for AMG as a linear system solver early on in its development for that purpose; it took many years of concerted effort for AMG to ripen to the successful linear system solver technology that it is today, and self-learning AMG eigensolvers and singular triplet solvers will require time and effort to mature as well.
In addition, new types of application problems often require at least some modification in algorithmic components like coarsening schemes. In this sense, the present paper is still an early step in the development of AMG methods for singular triplets: it presents a framework and one particular way to implement the components, but these components have to be further improved and there are alternative candidates for some of them. 
For example, in the multiplicative phase, it is not always easy to find a good choice for the number of relaxations to be done on the test vectors. Too many relaxations may lead to linear dependence (and how many is too many depends on the a priori not necessarily known gaps in the extrema of the spectrum), and not enough relaxations may lead to coarse-level problems that do not identify the correct singular triplets. Similarly, the choice of the weight factors in the LS fitting is also not straightforward and results may depend on it significantly. These aspects need to be improved. Similarly, in the additive phase, the V-cycles may not convergence for some of the tentative triplets, and there is no guarantee that no triplets are missed (even though we have only rarely observed this).
Also, it would be interesting to consider special-purpose coarsening routines, for example, for principal component analysis data sets. For some applications, one may need mutiple $P$ and $Q$ interpolation operators to fit groups of (possibly overlapping) triplets, or coarse grids with multiple degrees of freedom per coarse grid point may need to be considered, especially if singular triplets have singular vectors that are very dissimilar.
In the mutiplicative phase, instead of using the BAMG approach, one could consider building up interpolation operators that fit the relevant vectors by using the so-called `adaptive' approach from \cite{asa,aamg}, and possibly extending it to fit multiple target vectors. Similarly, the current multigrid-Ritz additive phase could be replaced by methods of preconditioned inverse iteration, locally optimal block preconditioned conjugate gradient, or Rayleigh quotient multigrid type \cite{borzi,lobpcg,hetmaniuk}. Also, compatible relaxation processes may be considered for coarsening \cite{cr,relcoarse}. The results presented in this paper show initial success and promise for our general approach, but improvements and modifications of the components are possible and are a topic of continued research.
\section{Conclusion}
\label{sec:conc}
We have described a new algebraic multilevel framework for computing dominant and minimal singular triplets. As far as we are aware, this is the first algebraic multigrid method that directly tackles the SVD problem, without working on $A^t\,A$ or the augmented symmetric system. We combine a multiplicative phase with an additive phase to obtain a self-learning method that can converge to machine accuracy for multiple singular vectors represented collectively using single interpolation operators. The self-learning capability of the algorithm makes it applicable to many types of problems, both for dominant and minimal triplets.
We have identified a generalized SVD decomposition of a matrix $A$ relative to two SPD matrices $B$ and $C$ of compatible dimensions as the problem to be solved on the coarse levels of our multilevel method, and have stated its existence and uniqueness properties and discussed relevant solution methods.
Our multiplicative phase follows the BAMG framework, as in \cite{kushnir} for SPD eigenproblems, and our additive phase follows a multigrid-Ritz strategy, as in \cite{borzi} for SPD eigenproblems. The specialization of our combined method to SPD matrices offers a new extension of those existing AMG eigensolvers, that allows for highly accurate convergence and is flexible due to its self-learning nature.
Ongoing work is aimed at improving the parameter-independence and robustness of components of the algorithm, and alternative building blocks can be considered \cite{asa,aamg,borzi,lobpcg,hetmaniuk,cr,relcoarse} for some of the components in the algorithmic framework.
Numerical tests using our current implementation showed that convergence to high accuracy can be obtained in a modest number of V-cycles, and the versatility of the approach was illustrated by applying it to problems from different domains.


\end{document}